\documentclass[11pt,a4paper]{amsart}
\usepackage[T1]{fontenc}
\usepackage{lmodern}
\usepackage{mathtools,amssymb}
\usepackage[margin=30mm]{geometry}
\usepackage{microtype}
\usepackage{needspace}
\usepackage[hidelinks]{hyperref}

\newtheorem{theorem}{Theorem}[section]
\newtheorem{lemma}[theorem]{Lemma}
\newtheorem{proposition}[theorem]{Proposition}
\newtheorem{corollary}[theorem]{Corollary}
\theoremstyle{definition}

\theoremstyle{remark}
\newtheorem{remark}[theorem]{Remark}

\DeclareMathOperator{\Var}{Var}
\DeclareMathOperator{\Aff}{Aff}
\DeclareMathOperator{\cont}{c}
\newcommand{\AI}{\mathsf{AI}}
\newcommand{\CAI}{\mathsf{CAI}}
\newcommand{\Avar}{\mathcal A}
\newcommand{\VV}{\mathcal V}
\newcommand{\TT}{\mathcal T}
\newcommand{\Ftwo}{\mathbb F_2}
\newcommand{\eps}{\varepsilon}
\newcommand{\poly}[1]{\mathbf{#1}}
\newcommand{\degof}[2]{m(#1,#2)}
\newcommand{\absorb}{\preceq}

\hypersetup{pdftitle={The finite basis problem for varieties generated by all semirings of a fixed order},
  pdfauthor={Aifa Wang and Lili Wang},
  pdfsubject={A structural proof for order five and a classification by order},
  pdfkeywords={semiring, finite basis, variety, cyclic identity}}

\title[Varieties generated by semirings of fixed order]
{The finite basis problem for varieties generated by all semirings of a fixed order}
\author{Aifa Wang}
\address[Aifa Wang]{School of Mathematical Sciences, Chongqing University of Technology,
Chongqing 400054, P.R. China}
\email{wangaf@cqut.edu.cn}
\author{Lili Wang}
\address[Lili Wang]{School of Mathematical Sciences, Chongqing University of Technology,
Chongqing 400054, P.R. China}
\email{wllaf@cqut.edu.cn}
\thanks{This work was supported by the National Natural Science Foundation
of China (12371024), the Science and Technology Research Program of Chongqing
Municipal Education Commission (KJZD-K2024011102) and the Chongqing Natural
Science Foundation Innovation and Development Joint Fund (Municipal Education
Commission) (CSTB2025NSCQ-LZX0067).}
\date{September 19, 2026}
\subjclass[2020]{16Y60, 08B05, 03C05}
\keywords{Semiring, additively idempotent semiring, variety, finite basis, cyclic identity}

\begin{document}

\begin{abstract}
Let $\Avar_n$ and $\VV_n$ be the varieties generated by all
$n$-element additively idempotent semirings (ai-semirings) and
all $n$-element semirings with
commutative addition, respectively, in the signature $(+,\cdot)$
without constants. We prove that both varieties are finitely based
if and only if $n\leq2$. The five-element case follows from cyclic
identities and a structural analysis of products of squares.
For $n\geq6$, each variety contains a finite inherently nonfinitely
based ai-semiring. We also prove $\VV_n\cap\AI=\Avar_n$ for every
$n$, where $\AI$ is the variety of all ai-semirings.
\end{abstract}

\maketitle

\section{Introduction}

Throughout this paper a \emph{semiring} is a nonempty algebra
$(S,+,\cdot)$ in which addition is associative
and commutative, multiplication is associative, and multiplication
distributes over addition on both sides. The signature contains no
constants. Neither an additive identity nor a multiplicative identity
is required. An \emph{additively idempotent semiring}, or
\emph{ai-semiring}, additionally satisfies $x+x\approx x$.
We write $\AI$ for the variety of ai-semirings and $\CAI$ for its
subvariety satisfying $xy\approx yx$.

An algebra is \emph{finitely based} if all its identities follow
from a finite set of identities; otherwise it is \emph{nonfinitely
based}. The same terminology applies to its generated variety.
For a class $\mathcal K$ of algebras, let $\Var(\mathcal K)$ denote
the variety it generates. For every positive integer $n$, put
\begin{align*}
\Avar_n&=\Var\{S:S\in\AI,\ |S|=n\},\\
\VV_n&=\Var\{S:S\text{ is a semiring},\ |S|=n\}.
\end{align*}
For order two, Shao and Ren~\cite{SR15} proved that every subvariety
of $\Avar_2$ is finitely based. Wang, Wang, Li, and Yin~\cite{WWLY26}
obtained the same conclusion for $\VV_2$. For individual
three-element ai-semirings, the results of Zhao et al.~\cite{ZRCS20}
and Jackson, Ren, and Zhao~\cite{JRZ22} give a classification in
which $S_7$ is the only nonfinitely based algebra, up to isomorphism.
The four-element problem has been studied in a series of papers
by Ren et al.~\cite{RLZC25,RLYC26} and
Yue et al.~\cite{YRZS25}. Further nonfinite basis results for
particular four-element ai-semirings appear in~\cite{YR26,YRG26}.
General sufficient conditions and the structure of limit varieties
are considered in~\cite{JRZ22,LRY26}.

The varieties $\Avar_n$ and $\VV_n$ are generated jointly by all
algebras of the prescribed order. Their finite basis problem is
distinct from the problem for the individual generators: a variety
containing a nonfinitely based subvariety need not itself be
nonfinitely based. Wang, Ju, and Wang~\cite{WJW26} proved that
$\Avar_3$ has no equational basis with a uniform bound on the number
of variables. They also obtained a nonfinite basis result for the
joint variety of three-element semirings under a convention
permitting noncommutative addition. Yue and Shao~\cite{YS26}
proved the corresponding result for $\Avar_4$.

We prove the five-element case and combine it with the known
small-order results and a criterion of Dolinka, Gusev, and
Volkov~\cite{DGV25} to obtain the following classification.

\begin{theorem}\label{thm:main}
For every positive integer $n$, each of $\Avar_n$ and $\VV_n$
is finitely based if and only if $n\leq2$.
\end{theorem}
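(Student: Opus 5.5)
The proof splits by the value of $n$, and the cases are largely independent.

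\emph{Small orders.} For $n=1$ both varieties are trivial, hence finitely based. For $n=2$, the variety $\Avar_2$ is itself a subvariety of $\Avar_2$, so it is finitely based by Shao and Ren~\cite{SR15}. Likewise $\VV_2$ is finitely based by~\cite{WWLY26}.

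\emph{Orders three and four.} For $n=3,4$ I would cite Wang--Ju--Wang~\cite{WJW26} and Yue--Shao~\cite{YS26}. These show that $\Avar_3$ and $\Avar_4$ have no basis with a bounded number of variables, hence no finite basis. To pass to $\VV_n$, I would first establish $\VV_n\cap\AI=\Avar_n$, as announced in the abstract. The inclusion $\supseteq$ is clear. For $\subseteq$, let $S$ be an $n$-element semiring. I would show that any ai-semiring satisfying all identities of $S$ already satisfies those of some $n$-element ai-semiring. The natural candidate is the quotient of $S$ by the congruence generated by $x+x\sim x$, padded to exactly $n$ elements. Padding is possible because for $m\le n$ every $m$-element ai-semiring lies in $\Avar_n$: adjoin a new element to an ideal, or use a two-element extension, and take subalgebras.

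From the equality one deduces: if $\VV_n$ were finitely based by $\Sigma$, then $\Sigma\cup\{x+x\approx x\}$ would be a finite basis of $\VV_n\cap\AI=\Avar_n$. Thus nonfinite basedness of $\Avar_n$ transfers to $\VV_n$.

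\emph{Order five.} This is the new case. I would argue as in the variable-bounded approach, with one family of identities per $k$. Consider the cyclic identities
\[
x_1x_2\cdots x_k + x_2x_3\cdots x_kx_1 + \cdots \approx \text{(a rearranged variant)}.
\]
More precisely, I would use identities equating a sum of products of squares with a sum that also includes cyclic shifts. The plan has two steps.
\begin{enumerate}
\item Such an identity $\sigma_k$ holds in every five-element ai-semiring. To prove this, use that multiplicative powers stabilise ($x^{a}=x^{a+p}$ with bounded $a,p$ when $|S|=5$) and carry out a structural analysis of products of squares in $S$.
\item There is an ai-semiring satisfying all identities of $\Avar_5$ in fewer than $k$ variables but failing $\sigma_k$. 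I would construct it as a syntactic, Rees-quotient-type ai-semiring of words, where every identity in fewer than $k$ variables can only rewrite proper subwords.
\end{enumerate}
Then $\VV_5$ follows via the intersection result.

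\emph{Orders six and above.} Here I would apply the Dolinka--Gusev--Volkov criterion~\cite{DGV25}. I would exhibit a finite ai-semiring of order at most $6$ that is inherently nonfinitely based (for instance, a flat extension of a suitable semigroup), or one whose membership in $\Avar_n$ makes every locally finite variety containing it nonfinitely based. Since $\Avar_n$ and $\VV_n$ are generated by finitely many finite algebras, they are locally finite. Each contains this algebra, because the algebra, padded to order $n$, is among the generators or is a subalgebra of one. Hence neither variety is finitely based.

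\emph{Main obstacle.} I expect the order-five case to be the hardest, specifically verifying that $\sigma_k$ holds in all five-element ai-semirings without a computer enumeration. I would first try to reduce to the case where $S$ has a multiplicatively idempotent square structure, via the analysis of products of squares.
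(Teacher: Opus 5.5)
You get the bookkeeping right: $n\le 2$ and the citations for $n=3,4$ match the paper. But the order-five case, which is the new content of the theorem, is not yet a proof plan. You never specify $\sigma_k$, and the shape you guess (sums of cyclic shifts of a product) is not the relevant one. The cyclicity that matters is an odd cycle in the indices of auxiliary variables $y_i$.

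The difficulty is that one family of identities must both hold in every five-element ai-semiring and resist derivation from identities in boundedly many variables. The flat extension $G$ of $C_4$ governs both requirements. In the paper, $\sigma_k$ is $p_kz\absorb p_k+p_kz^2+x_1^2z+\sum_i y_i^2y_{i+1}^2x_i^2z$ with $k$ odd. The first three summands form a guard $pz\le p+pz^2+a_1z$. The analysis of products of squares (Lemmas~\ref{lem:cyclic-codimension}--\ref{lem:guard-theoretical}) proves this guard in every five-element ai-semiring except $G$, where it genuinely fails. The cyclic summands and the oddness of $k$ are exactly what make $\sigma_k$ hold in $G$ (Lemma~\ref{lem:test-validity}). ``Powers stabilise'' does not get you there without isolating $G$.

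Your Step~2 essentially restates the goal. To know that some Rees-quotient of words satisfies every $(k-1)$-variable identity of $\Avar_5$, you need control over those identities, and you have none. The paper obtains that control from the test variety $\TT=\Var(H^0,G)\subseteq\Avar_5$, in which every basis identity holds. There, after adjoining commutativity, the class of $\poly F_k$ is exactly $\{\poly F_k,\poly F_k+Q_k\}$ (Lemma~\ref{lem:class}). Hence a single elementary step must create $Q_k$ from an instance $h\varphi(\poly s)\subseteq\poly F_k$. Evaluating in $G$ along the cycle with one edge deleted shows this instance contains every $B_i$ (Lemma~\ref{lem:locals-required}). Degree bounds then force $|\cont(\poly s)|\ge k$ (Lemma~\ref{lem:substitution}).

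The transfer $\VV_n\cap\AI=\Avar_n$ also has a hole. The quotient $S/\theta$ by the congruence generated by $x+x\sim x$ is the right object. However, ``an ai-semiring satisfying the identities of $S$ satisfies those of $S/\theta$'' is not a general fact about reflections. For instance, the perfect group $A_5$ has trivial abelianization, yet its variety contains $\mathbb Z_2$. What makes it true here is that $e(s)=Ns$ with $N=n!$ is an idempotent endomorphism of $S$ whose kernel is $\theta$. An identity $u\approx v$ of $e(S)\cong S/\theta$ therefore lifts to the identity $Nu\approx Nv$ of $S$, which reduces to $u\approx v$ in $\AI$ (Proposition~\ref{prop:intersection}). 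You need to supply this mechanism.

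For $n\ge 6$ you name no witness. The paper does not use an inherently nonfinitely based algebra of order at most $6$. It uses the $36$-element product $\overline{\mathcal A_2^1}\times\mathcal B_2^1$, whose Zimin words are maximal via the first factor and minimal via the second, hence isolated. This product lies in $\Avar_n$ because both six-element factors do; padding plays no role.
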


Our sufficient condition for order five uses the isolated-class
arguments of~\cite{WJW26,YS26}. Two five-element commutative
auxiliary semirings determine an equivalence class consisting of
exactly two terms. As the cycle length increases, an identity used
to pass between the two terms must involve an unbounded number of
variables. The cyclic identities
contain an additional summand $pz^2$, which is needed for their
validity on all five-element generators. We treat substitutions
by arbitrary terms. In particular, the degree restrictions show that
a variable occurring twice in the relevant word must have a
single-word image.

Validity of the cyclic identities is proved by analysing products
of squares in semigroups of order at most five. If the
idempotents form a subsemigroup, then so do the squares. In an
ai-semiring of order at most five, a product $b$ of squares that is
not itself a square satisfies $bz=bz^2$.
For a square $b$, the possible failures of
$bz\leq b+bz^2+z$ reduce to three five-element ai-semirings.
Two satisfy the inequality after arbitrary left multiplication;
the remaining case, the flat extension of $C_4$, is treated by
an odd-cycle argument.

For $n\geq6$, the two six-element ai-semirings in~\cite{DGV25}
have an inherently nonfinitely based direct product. This product
belongs to every $\Avar_n$ with $n\geq6$. Finally, an additive
retraction gives $\VV_n\cap\AI=\Avar_n$ and transfers the
nonfinite basis results to semirings with commutative addition.

Section~\ref{sec:preliminaries} fixes the notation and the form of
equational deduction. The auxiliary semirings and cyclic identities
are introduced in Section~\ref{sec:test}. Sections
\ref{sec:isolation} and~\ref{sec:substitution} establish the
sufficient condition, and Section~\ref{sec:five} applies it to
$\Avar_5$. Section~\ref{sec:six} treats orders at least six.
The additive retraction and the proof of
Theorem~\ref{thm:main} are given in Section~\ref{sec:transfer}.

\section{Preliminaries}\label{sec:preliminaries}

We use the standard facts of universal algebra and equational logic;
see Burris and Sankappanavar~\cite{BS81}. A variety is
\emph{locally finite} if each of its finitely generated members is
finite.

\begin{lemma}\label{lem:padding}
Every ai-semiring of order at most $n$ belongs to $\Avar_n$. The
varieties $\Avar_n$ and $\VV_n$ are locally finite.
\end{lemma}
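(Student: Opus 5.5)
The plan is to prove the first assertion by padding: from an ai-semiring $S$ with $|S|=k<n$ we build an ai-semiring $T$ with $|T|=n$ that contains $S$ as a subalgebra. Since varieties are closed under subalgebras, this gives $S\in\Var(T)\subseteq\Avar_n$. For the construction we adjoin $n-k$ new elements $a_1,\dots,a_{n-k}$ that form an additive chain on top of $S$. Concretely, order them $a_1<a_2<\dots<a_{n-k}$, put $s+a_i=a_i$ for $s\in S$, and put $a_i+a_j=a_{\max(i,j)}$. For multiplication we make every product involving a new element equal to the top element $a_{n-k}$. Then $(T,+)$ is a semilattice in which $S$ is a subsemilattice and the new elements sit above $S$ in a chain. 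Multiplication is associative because any product with a new factor equals $a_{n-k}$, and products inside $S$ are unchanged.

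Distributivity has to be checked case by case. If $x,y,z\in S$ it holds already in $S$. Suppose instead that some variable is new. If $x$ is new, both $x(y+z)$ and $xy+xz$ equal $a_{n-k}$. If $x\in S$ and $y$ is new, then $y+z$ is new, so $x(y+z)=a_{n-k}$; also $xy=a_{n-k}$, which absorbs $xz$ in the sum, so $xy+xz=a_{n-k}$. The right distributive law is symmetric. The key point is that $a_{n-k}$ is the additive top, which makes it absorbing for $+$. This check is the only place where some care is needed; I expect it to be straightforward. An alternative construction would use a single new element $\infty$ together with a chain of dummy elements, but the uniform top-absorbing construction avoids subcases.

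For local finiteness, note that each of $\Avar_n$ and $\VV_n$ is generated by a \emph{finite} set of finite algebras, because up to isomorphism there are only finitely many algebras of order $n$ in a finite signature. A variety generated by a finite class of finite algebras equals $\Var(A)$ for the finite product $A$ of the generators, since $\Var(K)=HSP(K)$ and the product of the generators generates the same variety. It is a standard fact (Burris–Sankappanavar) that the variety generated by a finite algebra is locally finite: the $m$-generated free algebra embeds in $A^{A^m}$, which is finite. Every finitely generated member is a homomorphic image of such a free algebra, so it is finite. This settles both $\Avar_n$ and $\VV_n$ at once.
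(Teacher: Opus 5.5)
Your proof is correct and follows essentially the paper's route. The paper pads by adjoining one element absorbing for both operations and iterating, whereas you adjoin the whole chain $a_1<\dots<a_{n-k}$ in one step, with the additive top $a_{n-k}$ absorbing every product that involves a new element; this is a minor variant of the same mechanism. Your local finiteness argument, passing to the finite product of the finitely many generators and using that a finite algebra generates a locally finite variety, is the same as the paper's.
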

\begin{proof}
Given a semiring $S$, adjoin a new element $\omega$ absorbing for
both operations. Each associativity or distributivity instance
involving $\omega$ has value $\omega$ on both sides; the remaining
instances are unchanged. If $S$ is an ai-semiring, so is the
extension. Iterating this construction embeds every ai-semiring
of order at most $n$ into one of order $n$.

For fixed $n$ there are finitely many operation tables on an
$n$-element set. Thus each of the two joint varieties is generated
by a finite direct product of finite algebras. If $D$ is a finite
algebra, its free algebra on $r$ generators in $\Var(D)$ is
represented by the $r$-ary term functions on $D$, so is finite.
Consequently both joint varieties are locally finite.
\end{proof}

Fix a countably infinite alphabet $X$. Let $X^+$ and $X^*$ be its
free semigroup and free monoid, with empty word $\eps$. Their
commutative counterparts are denoted by $X_c^+$ and $X_c^*$.
By distributivity and additive idempotence, an ai-semiring term is
represented by a finite nonempty set of words. Addition is union,
and multiplication is setwise concatenation; see
\cite[Section~1]{RLZC25}. We write terms in
bold type and their individual words in ordinary type. In the
commutative setting we use finite nonempty subsets of $X_c^+$.
The notation $\cont(w)$ denotes the content of a word, and
$\degof{x}{w}$ its multiplicity of $x$. The content of a term is
the union of its word contents.

The additive order of an ai-semiring is
$a\leq b$ if and only if $a+b=b$. For terms we write
\[
\poly u\absorb\poly v
\quad\text{for the identity}\quad
\poly u+\poly v\approx\poly v.
\]
This differs from literal inclusion of word sets, denoted by
$\poly u\subseteq\poly v$. Both operations preserve the additive
order.

Every identity $\poly u\approx\poly v$ is equivalent, modulo the
ai-semiring identities, to all the inequalities
$u\absorb\poly v$ for $u\in\poly u$ and
$v\absorb\poly u$ for $v\in\poly v$. This replacement introduces
no variables. We may therefore work with inequalities
$r\absorb\poly s$, where $r$ is a word.

A substitution maps each variable to a nonempty term. When a
substituted term is expanded, different occurrences of the same
variable may contribute different words from its image.

\begin{lemma}\label{lem:deduction}
In the free commutative ai-semiring, an elementary application of
an inequality $r\absorb\poly s$ has the form
\begin{equation}\label{eq:deduction}
\poly l+\poly K\varphi(\poly s)
\ \longleftrightarrow\
\poly l+\poly K\varphi(\poly s)+\poly K\varphi(r).
\end{equation}
The additive context $\poly l$ and the multiplicative context
$\poly K$ may be absent. Every equational deduction is a finite
sequence of such steps.
\end{lemma}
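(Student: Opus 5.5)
This is the standard completeness theorem for equational logic (Birkhoff), specialised to the free commutative ai-semiring and to identities rewritten as inequalities. The plan is to start from the usual description of derivations. An identity follows from a set $\Sigma$ of identities, modulo $\CAI$, if and only if its two sides are connected by a finite chain of terms. Consecutive terms in the chain differ by replacing a subterm occurrence $\varphi(\poly p)$ with $\varphi(\poly q)$ for some $\poly p\approx\poly q$ in $\Sigma\cup\Sigma^{-1}$ and some substitution $\varphi$. Terms are identified with their normal forms, namely finite nonempty subsets of $X_c^+$. By the remarks preceding the lemma, each identity of $\Sigma$ may be replaced by the finitely many inequalities $r\absorb\poly s$, and each of these is itself the identity $r+\poly s\approx\poly s$. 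So it suffices to analyse a single replacement step for an identity of the form $\poly s+r\approx\poly s$, applied in either direction.

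Next comes the structural description of a term context. Because addition is union and multiplication is setwise product in $X_c^+$, every term containing a distinguished occurrence of a subterm $\poly t$ can be written as $\poly l+\poly K\poly t$ or $\poly l+\poly t$. Here $\poly l$ is a term and $\poly K$ is a term, possibly absent. I will prove this by induction on the construction of the context. If the context is $C(\cdot)+\poly m$, add $\poly m$ to $\poly l$. If the context is $C(\cdot)\cdot\poly m$, distributivity turns $(\poly l+\poly K\poly t)\poly m$ into $\poly l\poly m+(\poly K\poly m)\poly t$. Commutativity of multiplication means left and right multipliers need not be distinguished, and this is the only place where the commutative setting is used. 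Applying this with $\poly t=\varphi(\poly s+r)=\varphi(\poly s)+\varphi(r)$, replaced by $\poly t'=\varphi(\poly s)$, the two sides become
\[
\poly l+\poly K\varphi(\poly s)+\poly K\varphi(r)
\quad\text{and}\quad
\poly l+\poly K\varphi(\poly s),
\]
which is exactly \eqref{eq:deduction}. Here distributivity was used once more to split $\poly K(\varphi(\poly s)+\varphi(r))$. A step in the opposite direction is the same move read backwards, which is why the display is written as a two-way arrow.

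Finally, every equational deduction, whether from the original identities or from their inequality forms, is a finite chain of such elementary replacements. Each replacement has the displayed shape, and the deduction is therefore a finite sequence of steps of the form \eqref{eq:deduction}. There is no genuine obstacle. The only point needing care is the bookkeeping in the context induction: distributing a multiplier across $\poly l+\poly K\poly t$ must keep the occurrence of $\poly t$ tracked as a single factor. In addition, $\varphi(r)$ must be understood as the full expanded term, not a single word, because different occurrences of a variable may contribute different words of its image. Both points are routine once terms are treated as sets of commutative words.
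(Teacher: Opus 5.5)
Your proposal is correct and follows the same route as the paper. Equational completeness reduces a deduction to contextual substitution instances. Distributivity puts a context with one distinguished occurrence into the form $\poly l+\poly K(\cdot)$, with commutativity merging the left and right multipliers into $\poly K$. Applying this to $\poly s+r\approx\poly s$ in either direction gives \eqref{eq:deduction}. Your induction on the construction of the context spells out what the paper states in a few lines, and it also shows that $\poly l$ may be absent, not only $\poly K$.
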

\begin{proof}
Equational completeness describes deductions as contextual
substitution instances of defining identities. Expand a context
with one distinguished occurrence of its argument by
distributivity. Addition contributes to $\poly l$, and multiplication
contributes to the factors on either side of the occurrence. In
the commutative setting these factors combine into $\poly K$.
Applying this to $\poly s\approx\poly s+r$ gives
\eqref{eq:deduction}. For explicit formulations in ai-semirings,
see~\cite[Lemma~1.1]{RLZC25} and~\cite[Lemma~2.1]{YS26}.
\end{proof}

An absent multiplicative context is represented by $\{\eps\}$,
and an absent additive context by the empty set. These are
conventions for contexts only; neither the empty word nor the
empty set is an allowed image of a variable.

\section{Auxiliary semirings and cyclic identities}
\label{sec:test}

Let
\[
H^0=\{0,1,a,a^2,\infty\},\qquad
0<1<a<a^2<\infty,
\]
with maximum as addition. The element $0$ is a multiplicative
zero. On the other four elements put $a^0=1$, $a^3=\infty$, and
\[
a^ia^j=a^{\min(i+j,3)}\qquad(0\leq i,j\leq3).
\]
Multiplication is associative by truncated addition of exponents,
and adjoining the multiplicative zero preserves associativity.
Each multiplication map is order preserving on the chain, hence
preserves binary joins. Thus $H^0$ is a commutative ai-semiring.
Its subalgebra $D=\{0,1\}$ is the two-element distributive lattice,
with join as addition and meet as multiplication.

Let $C_4=\langle g\rangle$ and let
\[
G=C_4\cup\{\infty\}
\]
be its flat extension: $u+u=u$, $u+v=\infty$ for $u\ne v$, and
$\infty$ is absorbing for multiplication. This is a special case
of the flat construction discussed in~\cite[Section~2]{JRZ22}.
Multiplication by a
group element permutes $C_4$ and fixes $\infty$, so preserves flat
addition; multiplication by $\infty$ is constant. Hence $G$ is
also a commutative ai-semiring. Element names in these definitions
do not introduce constants into the signature.

Put
\begin{equation}\label{eq:testvariety}
\TT=\Var(H^0,G)\subseteq\Avar_5\cap\CAI.
\end{equation}
Let
\[
\mathcal N=\{k\in\mathbb N:k\geq5,\ k\text{ is odd}\}.
\]
For $k\in\mathcal N$, take distinct variables
$x_1,\ldots,x_k,y_1,\ldots,y_k,z$ and read subscripts of $y$
modulo $k$. In the noncommutative free ai-semiring define
\begin{align}
p_k&=x_1^2x_2^2\cdots x_k^2,
&q_k&=p_kz,\label{eq:noncomm-words}\\
\poly f_k&=p_k+p_kz^2+x_1^2z+
\sum_{i=1}^k y_i^2y_{i+1}^2x_i^2z.
\label{eq:noncomm-poly}
\end{align}
The multiplication order is the displayed order. Denote the
identity $q_k\absorb\poly f_k$ by $\sigma_k$.

In the commutative free ai-semiring write
\begin{gather}
P_k=\prod_{i=1}^k x_i^2,\quad
R_k=P_kz^2,\quad A_k=x_1^2z,\quad Q_k=P_kz,
\label{eq:comm-words}\\
B_i=x_i^2y_i^2y_{i+1}^2z\quad(1\leq i\leq k),
\qquad
\poly F_k=P_k+R_k+A_k+\sum_{i=1}^kB_i.
\label{eq:comm-poly}
\end{gather}
Thus $\poly F_k$ and $Q_k$ are the commutative images of
$\poly f_k$ and $q_k$. All words in $\poly F_k+Q_k$ are distinct.

\begin{lemma}\label{lem:test-validity}
The variety $\TT$ satisfies $\sigma_k$ for every $k\in\mathcal N$.
\end{lemma}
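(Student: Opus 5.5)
Both $H^0$ and $G$ are commutative, so it suffices to show that $Q_k\absorb\poly F_k$ holds in each of them. Concretely, for every assignment of the variables we must show that the value of $Q_k$ lies below the value of $\poly F_k$ in the additive order. We treat the two generators separately.

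\emph{The semiring $H^0$.} Here addition is maximum on a chain, so it suffices to find one summand of $\poly F_k$ whose value is at least that of $Q_k$. Since $0$ is a multiplicative zero, either the value of $Q_k$ is $0$, and there is nothing to prove, or every $x_i$ and $z$ takes a value in $\{1,a,a^2,\infty\}$. In the second case exponents add with truncation, so $z\leq z^2$ in the chain, with equality when $z\in\{1,\infty\}$. Multiplication is order preserving, so $R_k=P_kz^2\geq P_kz=Q_k$. Thus the summand $R_k$ already dominates, and the variables $y_i$ play no role.

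\emph{The semiring $G$.} In the flat order, $Q_k\absorb\poly F_k$ means that either $\poly F_k$ evaluates to $\infty$, or every summand of $\poly F_k$ has the same value $c\in C_4$ and $Q_k=c$.

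First suppose some variable takes the value $\infty$. If it is an $x_i$, then $P_k=\infty$. If it is $z$, then $R_k=\infty$. If it is a $y_i$, then $B_i=\infty$. In every case $\poly F_k=\infty$ and the inequality holds.

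So assume all variables take values in $C_4$ and all summands of $\poly F_k$ are equal. The squares of $C_4$ form the subgroup $\{1,g^2\}$ of order two, which is central and has exponent two. The argument then runs in three steps.
\begin{enumerate}
\item From $P_k=R_k$ we get $z^2=1$.
\item For each $i$, the equality $B_i=A_k$ gives $y_i^2y_{i+1}^2=x_i^{-2}x_1^2$. Multiply these relations around the cycle $i=1,\ldots,k$. Each $y_i^2$ occurs exactly twice, so the left side becomes $1$. Hence $P_k=\prod_i x_i^2=x_1^{2k}$. Since $k$ is odd and $x_1^2$ has order at most two, this gives $P_k=x_1^2$.
\item Now $A_k=P_k$ reads $x_1^2z=x_1^2$. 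Cancelling in the group gives $z=1$, hence $Q_k=P_kz=P_k=c$, as required.
\end{enumerate}

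The main point is the odd-cycle step for $G$. Oddness of $k$ is exactly what forces $P_k=x_1^2$. For even $k$ one could take $x_1^2=1$ and the other $x_i^2=g^2$ with alternating $y_i^2$, making $\poly F_k$ constant while $z=g^2$ gives $Q_k\neq P_k$. Everything else is routine bookkeeping.
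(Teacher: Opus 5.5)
Your proof is correct and is essentially the paper's. In $H^0$ the inequality $z\le z^2$ for $z\neq 0$ gives $Q_k\le R_k$. In $G$ the product of the $k$ cycle relations, with $k$ odd and squares of order at most two, forces $z=1$. The only difference is that you compare the $B_i$ with $A_k$, so your step~1 is never used, whereas the paper compares them with $c=P_k$ and needs $z^2=1$ from $R_k$.

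Your closing remark on even $k$ is wrong, though it plays no role in the lemma. With $x_1^2=1$ and $z=g^2$, no choice of the $y_i$ makes $\poly F_k$ constant; for alternating $y_i^2$ one already gets $B_1=1\neq g^2=A_k$. A genuine even-$k$ counterexample takes $x_1^2=x_2^2=z=g^2$, the other $x_i^2=1$, and $y_i^2y_{i+1}^2=x_i^2g^2$, which is solvable on an even cycle. This makes every summand of $\poly F_k$ equal to $1$, while $Q_k=g^2$.
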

\begin{proof}
In $H^0$, if $z=0$, then $P_kz=0$. Otherwise $z\leq z^2$, so
$P_kz\leq P_kz^2$. This proves absorption in $H^0$.

Consider $G$. Unless $\poly F_k=\infty$, all its words have a
common group value $c=P_k$. Every variable then takes a group
value because it occurs in $\poly F_k$. Since $P_k$ is a product
of squares, $c^2=1$, and $R_k=P_k$ gives $z^2=1$.
Multiplying the $k$ equalities $B_i=c$ yields
\[
c^k=P_k\left(\prod_{i=1}^k y_i\right)^4z^k=P_kz.
\]
As $k$ is odd, $c^k=c=P_k$, whence cancellation gives $z=1$.
Therefore $Q_k=c=\poly F_k$. The case $\poly F_k=\infty$ is
automatic.
\end{proof}

We recall a linear-algebra observation used below. Its variables
are scalars over a field and are unrelated to the semiring
variables.

\begin{lemma}\label{lem:affine}
Let $L,L_0,\ldots,L_t$ be linear forms on a finite-dimensional
vector space over a field. If
\[
L_0(\xi)=\cdots=L_t(\xi)
\quad\Longrightarrow\quad L(\xi)=L_0(\xi),
\]
then $L\in\Aff\{L_0,\ldots,L_t\}$.
\end{lemma}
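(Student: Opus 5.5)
The plan is to reduce the statement to the standard fact that a linear form vanishing on the kernel of a linear map is a combination of the forms defining that map, and then to read off the coefficient condition that makes the combination affine. Let $V$ be the space and $F$ the field. Put $M_i=L_i-L_0$ for $1\leq i\leq t$ and $N=L-L_0$. The hypothesis says exactly that every $\xi$ with $M_1(\xi)=\cdots=M_t(\xi)=0$ satisfies $N(\xi)=0$. In other words, $\ker N\supseteq\bigcap_{i=1}^t\ker M_i$.

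First I will show that $N$ lies in the span of $M_1,\ldots,M_t$ in the dual space $V^*$. Consider the linear map $\Phi\colon V\to F^t$, $\xi\mapsto(M_1(\xi),\ldots,M_t(\xi))$. Since $N$ vanishes on $\ker\Phi$, it factors as $N=\lambda\circ\Phi$ for a linear functional $\lambda$ on the image $\Phi(V)$. Extend $\lambda$ linearly to all of $F^t$ by choosing a complement. Writing $\lambda(e_1,\ldots,e_t)=\sum c_ie_i$ then gives $N=\sum_{i=1}^t c_iM_i$. Alternatively, one can argue by annihilators: in finite dimension, $(\bigcap\ker M_i)^\perp=\operatorname{span}\{M_i\}$.

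Substituting back gives
\[
L=L_0+\sum_{i=1}^t c_i(L_i-L_0)=\Bigl(1-\sum_{i=1}^t c_i\Bigr)L_0+\sum_{i=1}^t c_iL_i.
\]
The coefficients sum to $1$, so $L\in\Aff\{L_0,\ldots,L_t\}$. The case $t=0$ is degenerate: the hypothesis forces $L=L_0$, and this is consistent with the formula above with an empty sum.

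No step is a real obstacle. The only point needing care is the factorization through $\Phi$, which uses finite-dimensionality (or merely linearity) to extend $\lambda$ from the image. This is the standard dual-space argument, so I would present it briefly.
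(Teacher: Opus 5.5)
Your proof is correct and takes essentially the same approach as the paper's: both pass to the differences $L_i-L_0$ and $L-L_0$, use that a form vanishing on $\bigcap_i\ker(L_i-L_0)$ lies in the span of the $L_i-L_0$ (the paper states this as the annihilator of $\ker M$ being the row space of $M$), and then rewrite the result as an affine combination. One small notational slip: you mean $\lambda(y_1,\ldots,y_t)=\sum_i c_iy_i$ rather than $\lambda(e_1,\ldots,e_t)$.
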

\begin{proof}
Let $M$ have rows the coefficient vectors of $L_i-L_0$ for
$1\leq i\leq t$. The hypothesis says that $L-L_0$ annihilates
$\ker M$. The annihilator of $\ker M$ is the row space of $M$.
Thus $L-L_0$ is a linear combination of the $L_i-L_0$, which is
equivalent to the asserted affine representation.
\end{proof}

\section{An equivalence class with two terms}
\label{sec:isolation}

\begin{lemma}\label{lem:absorbed}
For every $k\in\mathcal N$ and every nonempty commutative word $w$,
\begin{equation}\label{eq:absorbed}
\TT\models w\absorb\poly F_k
\quad\Longleftrightarrow\quad
w\in\poly F_k+Q_k.
\end{equation}
\end{lemma}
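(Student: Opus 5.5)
The direction $\Leftarrow$ is immediate. Words of $\poly F_k$ are absorbed trivially. For $Q_k$, the commutative image of $\sigma_k$ is exactly $Q_k\absorb\poly F_k$, and Lemma~\ref{lem:test-validity} shows that $\TT$ satisfies it. For $\Rightarrow$ I would take a word $w$ with $\TT\models w\absorb\poly F_k$ and extract constraints on $w$ from evaluations in $H^0$, in $D$, and in $G$. The goal is to force $w$ to be one of the $k+4$ listed words.

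\emph{Step 1: content, via $D$.} Evaluate in $D=\{0,1\}$, where a word takes value $1$ exactly when all its variables are set to $1$. Set to $1$ precisely the variables of some word $u\in\poly F_k$. Then $w$ must have value $\le 1$; this is automatic, so this evaluation gives no information. Now reverse the roles. If some variable $v\in\cont(w)$ lies outside $\cont(\poly F_k)$, set $v=0$ and all other variables to $1$: then $\poly F_k=1$ while $w=0$, which is harmless. So the useful condition runs the other way: $w$ evaluates to $1$ only if some word of $\poly F_k$ does. Taking the assignment that is $1$ exactly on $\cont(w)$ gives that some word of $\poly F_k$ has content contained in $\cont(w)$. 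This yields $\cont(w)\supseteq \cont(u)$ for some $u\in\poly F_k$.

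\emph{Step 2: degrees, via $H^0$.} Assign $0$ to variables outside $\cont(w)$ and $1$, $a$ or $a^2$ to the others. The absorption then says that the exponent of $w$, truncated at $3$, is at most the maximum over those words $u$ of $\poly F_k$ with $\cont(u)\subseteq\cont(w)$. Setting a single variable to $a$ and the rest to $1$ bounds each multiplicity $\degof{x}{w}$ by the largest multiplicity of $x$ in a surviving word, which is at most $2$. Moreover $\cont(w)\subseteq\cont(\poly F_k)$ unless $w$ has value $0$. Using mixed assignments, or a truncation threshold argument, should show that $w$ is dominated degreewise by a single surviving word $u$. The case with two variables set to $a$ is where I expect this to work: the truncated bound $\min(\cdot,3)$ with multiplicities $\le2$ gives pairwise sums. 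Together with Step~1, $w$ lies between $\cont(u)$ and $u$ in the multiplicity order for some $u\in\poly F_k$.

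\emph{Step 3: the exact form, via $G$, which I expect to be the main obstacle.} In $G$, the absorption means that whenever all words of $\poly F_k$ share a common group value $c$, the word $w$ also equals $c$; otherwise $\poly F_k=\infty$ and there is no constraint. Passing to exponents in $\mathbb Z/4$, each word becomes a linear form, and Lemma~\ref{lem:affine} applies. It is stated over a field, so I would work over $\mathbb F_2$ or $\mathbb Q$ via a lift, or use $C_4$ directly to extract mod-$2$ and mod-$4$ information. The conclusion is that the exponent vector of $w$ is an affine combination of those of $P_k,R_k,A_k,B_1,\dots,B_k$, modulo the relevant modulus. The combination $\frac{1}{k}(\sum B_i)$-type relation from the proof of Lemma~\ref{lem:test-validity} shows how $Q_k$ arises, and oddness of $k$ is essential there. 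Intersecting this affine lattice with the box from Step~2, where multiplicities are at most $2$ and the support is constrained, should leave only the words of $\poly F_k$ and $Q_k$. The delicate part is the case analysis for candidates $w$ supported near $P_k$ with a $z$ or partial $y$-content. I would rule these out by exhibiting explicit $G$-assignments with all words of $\poly F_k$ equal, for example $z=g^2$ with suitable $y_i$ alternating, under which $w\ne c$.
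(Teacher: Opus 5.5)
There is a genuine gap in Step~2, and Step~3 cannot make up for it. (Your $\Leftarrow$ direction and Step~1 are fine.)

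\textbf{Step~2 is false as stated.} You claim that $H^0$ together with $D$ forces $w$ to be degreewise dominated by a single word of $\poly F_k$. Take $w=x_1x_2y_1y_2y_3z$. In $H^0$, if some variable of $w$ is $0$ then $w=0$. Otherwise $B_1$ and $B_2$ are nonzero, and if each $v$ has value $a^{e_v}$, then
\[
(2e_{x_1}+2e_{y_1}+2e_{y_2}+e_z)+(2e_{x_2}+2e_{y_2}+2e_{y_3}+e_z)
\geq 2\,(e_{x_1}+e_{x_2}+e_{y_1}+e_{y_2}+e_{y_3}+e_z).
\]
Hence $B_1+B_2\geq w$, even after truncation at $a^3$. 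So $H^0\models w\absorb\poly F_k$ and $\cont(w)\supseteq\cont(B_1)$, yet no word of $\poly F_k$ contains all of $x_1,x_2,y_1,y_3$. Your two-variable test forces co-occurrence only when the two multiplicities sum to at least $3$, so it says nothing about degree-one variables. This $w$ is excluded only by parity in $G$: put $x_1=g^2$ and all other variables $1$; then every word of $\poly F_k$ equals $1$ while $w=g^2$. The missing step is to apply this $G$-valuation \emph{first}, for each $x_i$ and $y_j$, to get $\degof{x_i}{w},\degof{y_j}{w}\in\{0,2\}$. Only then does the two-variable $H^0$ test give the incidence condition: every two $x$- or $y$-variables of $w$ lie in a common word.

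\textbf{Step~3 cannot repair this,} because some bad words pass every $G$-test. Take $w=P_ky_1^2y_2^2$. Whenever all words of $\poly F_k$ have a common group value $c$, the argument of Lemma~\ref{lem:test-validity} gives $z=1$. Then $x_1^2=A_k=c$ and $y_1^2y_2^2=B_1x_1^{-2}=1$, so $w=P_k=c$. Your plan to rule out candidates near $P_k$ with partial $y$-content by explicit $G$-assignments therefore fails for this word. It dies only in $H^0$, for example at $x_3=y_1=a$ with the other variables $1$, where $w=\infty$ but $\poly F_k=a^2$. Likewise, the $\Ftwo$ reduction needed for Lemma~\ref{lem:affine} is available only once the $x,y$-degrees are known to be even. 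Only then do $x_i=g^{u_i}$, $y_j=g^{v_j}$, $z=g^{2t}$ give values in $\{1,g^2\}$.

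\textbf{How to finish.} Once parity is in place, your version of the $H^0$ tests with zero outside $\cont(w)$ completes the proof without linear algebra.
\begin{itemize}
\item If every $x_i$ occurs in $w$, no $y_j$ can co-occur with all of them, so $w\in\{P_k,Q_k,R_k\}$.
\item Otherwise only $A_k$ and the $B_i$ can survive, which forces $\degof{z}{w}=1$. Co-occurrence within surviving words, using $k\geq5$, then gives $w=B_i$ if some $B_i$ survives and $w=A_k$ if none does.
\end{itemize}
The paper reaches the same point differently: it uses the incidence condition, the valuation $x_j=0$, $z=a$, and the $\Ftwo$ affine argument in the case $\cont(A_k)\subseteq\cont(w)$.
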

\begin{proof}
The reverse implication follows from Lemma~\ref{lem:test-validity}.
Suppose that $\TT\models w\absorb\poly F_k$.

Assign $a$ to one variable and $1$ to all the others in $H^0$.
For a variable in $\poly F_k$, the value of $\poly F_k$ is $a^2$;
for an outside variable, it is $1$. Hence
\begin{equation}\label{eq:degree-bound}
\cont(w)\subseteq\{x_1,\ldots,x_k,y_1,\ldots,y_k,z\},
\qquad \degof{v}{w}\leq2.
\end{equation}
Assign $g^2$ to one $x_i$ or one $y_j$ and $1$ to the other
variables in $G$. All words of $\poly F_k$ then have value $1$.
Consequently
\begin{equation}\label{eq:even-bound}
\degof{x_i}{w},\degof{y_j}{w}\in\{0,2\}.
\end{equation}

Every pair of different $x$- or $y$-variables occurring in $w$
must occur together in some word of $\poly F_k$. Otherwise,
assigning $a$ to the two variables and $1$ to all the others
in $H^0$ gives $\poly F_k\leq a^2$ but $w=\infty$, by
\eqref{eq:even-bound}. In particular,
\begin{equation}\label{eq:incidence}
x_i\text{ can occur with }y_j\text{ only if }j\in\{i,i+1\},
\end{equation}
and two different $y$-variables in $w$ must be adjacent on the
cycle.

The subsemiring $D$ implies that at least one word of $\poly F_k$
has content contained in $\cont(w)$. Indeed, assign $1$ to the
variables of $w$ and $0$ to all other variables. If no such word
existed, then $w=1$ and $\poly F_k=0$.

\smallskip\noindent
\emph{Case 1: $w$ contains every $x_i$.}
Each $y_j$ occurs together with only two $x$-variables in
$\poly F_k$. Since $k\geq5$, condition~\eqref{eq:incidence}
excludes every $y_j$ from $w$. Equations
\eqref{eq:degree-bound} and~\eqref{eq:even-bound} now give
$w\in\{P_k,Q_k,R_k\}$.

\smallskip
In the remaining cases choose an $x_j$ absent from $w$. Set
$x_j=0$, $z=a$, and every other variable equal to $1$ in $H^0$.
Then $P_k=R_k=0$ and $\poly F_k=a$. Hence
$\degof{z}{w}\leq1$. The content condition supplied by $D$
forces $w$ to contain $A_k$ or some $B_i$ in content, so
\begin{equation}\label{eq:z-once}
\degof{z}{w}=1.
\end{equation}

\smallskip\noindent
\emph{Case 2: $\cont(B_i)\subseteq\cont(w)$ for some $i$.}
The occurrence of $x_i$ excludes all $y$-variables except
$y_i,y_{i+1}$. An additional $x_j$ would have to occur together
with both $y_i$ and $y_{i+1}$, which forces $j=i$. Thus
$\cont(w)=\cont(B_i)$, and the degree restrictions give $w=B_i$.

\smallskip\noindent
\emph{Case 3: $\cont(A_k)\subseteq\cont(w)$.}
Write
\[
\degof{x_i}{w}=2\delta_i,\qquad
\degof{y_j}{w}=2\eta_j,
\qquad \delta_i,\eta_j\in\{0,1\},\quad\delta_1=1.
\]
Condition~\eqref{eq:incidence} gives $\eta_j=0$ for
$3\leq j\leq k$. Consider the valuations in $G$ given by
\[
x_i=g^{u_i},\qquad y_j=g^{v_j},\qquad z=g^{2t},
\qquad u_i,v_j,t\in\Ftwo.
\]
The values of all relevant words lie in $\{1,g^2\}$. After
dividing their exponents by $2$, their values are represented by
the following linear forms over $\Ftwo$:
\begin{align*}
P_k,R_k&:\ \sum_i u_i, & A_k&:\ u_1+t,\\
B_i&:\ u_i+v_i+v_{i+1}+t,
&w&:\ \sum_i\delta_i u_i+\sum_j\eta_jv_j+t.
\end{align*}
When the forms for $\poly F_k$ have a common value, absorption
forces the form for $w$ to have that value. By
Lemma~\ref{lem:affine}, it is an affine combination of the former
forms. Write its coefficients as
$\alpha,\gamma,\beta_1,\ldots,\beta_k$, merging the equal forms
for $P_k$ and $R_k$ into the coefficient $\alpha$.
Comparing the coefficient sum and the $t$-coordinate gives
\[
1=\alpha+\gamma+\sum_i\beta_i,
\qquad 1=\gamma+\sum_i\beta_i,
\qquad\alpha=0.
\]
The other coordinates give
\begin{equation}\label{eq:affine-coordinates}
\delta_i=\gamma\mathbf1_{i=1}+\beta_i,
\qquad \eta_j=\beta_j+\beta_{j-1}.
\end{equation}
As $\eta_j=0$ for $j\geq3$, we obtain
$\beta_2=\cdots=\beta_k$, and hence
$\delta_2=\cdots=\delta_k$. If these entries are $1$, Case~1
applies. Otherwise they are all $0$, and
\eqref{eq:affine-coordinates} gives
$\eta_1=\eta_2=\beta_1$. Thus $w=A_k$ or $w=B_1$.
The cases exhaust the content condition, proving the lemma.
\end{proof}

\begin{lemma}\label{lem:class}
For every commutative term $\poly u$ and every $k\in\mathcal N$,
\[
\TT\models\poly u\approx\poly F_k
\quad\Longleftrightarrow\quad
\poly u=\poly F_k\ \text{or}\ \poly u=\poly F_k+Q_k.
\]
\end{lemma}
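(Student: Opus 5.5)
The plan is to reduce both directions to Lemma~\ref{lem:absorbed} and Lemma~\ref{lem:test-validity}, using the fact that an identity $\poly u\approx\poly v$ is equivalent to the two families of word inequalities described in Section~\ref{sec:preliminaries}. Terms of $\TT$ are commutative, so we may regard $\poly u$ as a finite nonempty subset of $X_c^+$.

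For the reverse implication, $\poly u=\poly F_k$ is trivial. For $\poly u=\poly F_k+Q_k$ it suffices that $\TT\models Q_k\absorb\poly F_k$. This is the commutative image of $\sigma_k$, which holds in $\TT$ by Lemma~\ref{lem:test-validity}. Since both $H^0$ and $G$ are commutative, $\sigma_k$ and its commutative image have the same meaning there.

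For the forward implication, suppose $\TT\models\poly u\approx\poly F_k$. Then for each word $w\in\poly u$ we have $\TT\models w\absorb\poly F_k$. By Lemma~\ref{lem:absorbed}, $w\in\poly F_k+Q_k$, so
\[
\poly u\subseteq\poly F_k+Q_k.
\]
Conversely, each word $v\in\poly F_k$ satisfies $\TT\models v\absorb\poly u$. We must show $v\in\poly u$. The idea is to exploit that $\poly u\subseteq\poly F_k+Q_k$ and that the words of $\poly F_k+Q_k$ are distinct. We then exhibit, for each $v\in\poly F_k$, a valuation in $H^0$ or $G$ under which $v$ is strictly above the join of all other words of $\poly F_k+Q_k$. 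Such a valuation prevents $v$ from being absorbed unless $v$ itself lies in $\poly u$.

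The separating valuations we have in mind are as follows.
\begin{itemize}
\item For $P_k$: set every variable to $1$ in $D\subseteq H^0$ except the $y_j$ and $z$, which are set to $0$. Then $P_k=1$ while $R_k$, $Q_k$, $A_k$ and every $B_i$ equal $0$.
\item For $R_k$: take $z=a$ and all other variables $1$ in $H^0$. Then $R_k=a^2$, whereas $P_k=1$ and $Q_k=A_k=B_i=a$.
\item For $A_k$: set $x_2=0$ (here $k\ge5$ is used) and $y_j=0$, with the remaining variables $1$ in $D$. This leaves only $A_k=1$ nonzero.
\item For $B_i$: set every variable except those of $B_i$ to $0$ in $D$. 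This kills $P_k,R_k,Q_k$, every $B_j$ with $j\ne i$ (by the cycle structure, since $k\ge5$), and also $A_k$ unless $i=1$.
\end{itemize}

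The delicate case is separating $B_1$ from $A_k$, since $\cont(A_k)\subseteq\cont(B_1)$. For this we use $H^0$: set $y_1=y_2=a$, $x_1=z=1$, and every other variable $0$. Then $B_1=\infty$ while $A_k=1$ and all other words are $0$.

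Once these checks are done, $\poly F_k\subseteq\poly u\subseteq\poly F_k+Q_k$, and since $\poly F_k+Q_k$ adds only the single word $Q_k$, the term $\poly u$ is one of the two listed. The main obstacle is verifying carefully that each chosen valuation makes the relevant word strictly exceed the sum of all the others. The $B_1$ versus $A_k$ separation above is the one requiring care, and the rest are routine content arguments in $D$.
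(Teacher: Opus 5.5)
Your proof is correct and follows the paper's argument: Lemma~\ref{lem:absorbed} gives $\poly u\subseteq\poly F_k+Q_k$, and valuations in $D$ and $H^0$ that separate each word of $\poly F_k$ from all the others force $\poly F_k\subseteq\poly u$. The differences are cosmetic. The paper separates every $B_i$ with the single $H^0$ valuation $y_i=y_{i+1}=a$ and all other variables $1$, which avoids your special treatment of $B_1$. Also, your appeals to $k\geq5$ are unnecessary: $k\geq2$ suffices, and $B_j$ with $j\neq i$ already vanishes because $x_j=0$.
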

\begin{proof}
If $\poly u\approx\poly F_k$ holds in $\TT$, then each word of
$\poly u$ is absorbed by $\poly F_k$. Lemma~\ref{lem:absorbed}
gives $\poly u\subseteq\poly F_k+Q_k$. We show that every word
of $\poly F_k$ belongs to $\poly u$.

For $P_k$, evaluate in $D$ with all $x_i=1$ and all $y_i=z=0$.
Only $P_k$ has value $1$. For $A_k$, evaluate in $D$ with
$x_1=z=1$ and all other variables $0$. Again only the selected
word has value $1$.

For $R_k$, evaluate in $H^0$ with $z=a$ and all other variables
$1$. Then $R_k=a^2$, while every other word of $\poly F_k+Q_k$
has value at most $a$. For $B_i$, set $y_i=y_{i+1}=a$ and all
other variables equal to $1$ in $H^0$. Only $B_i$ contains both
selected $y$-variables, so $B_i=\infty$ and all other words
have value at most $a^2$.

Each valuation separates one word of $\poly F_k$ from the sum
of all the other words of $\poly F_k+Q_k$. Hence
$\poly F_k\subseteq\poly u\subseteq\poly F_k+Q_k$.
The converse follows from Lemma~\ref{lem:test-validity}.
\end{proof}

\section{A sufficient condition for nonfinite basability}
\label{sec:substitution}

We first show that if a nonempty sum of words from $\poly F_k$
absorbs $Q_k$ in $G$, then it contains every $B_i$.

\begin{lemma}\label{lem:locals-required}
Let $\varnothing\ne\poly E\subseteq\poly F_k$. If
$G\models Q_k\absorb\poly E$, then every $B_i$ belongs to
$\poly E$.
\end{lemma}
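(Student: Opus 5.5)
The plan is to argue by contraposition. Suppose $\varnothing\ne\poly E\subseteq\poly F_k$ and some $B_m\notin\poly E$. We will exhibit a valuation in $G$ at which every word of $\poly E$ takes one common group value $c$, while $Q_k$ takes a different group value. In the flat semiring this gives $\poly E=c$ and $Q_k+\poly E=\infty\neq c$, so $G\not\models Q_k\absorb\poly E$.

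The valuation lives in the cyclic group $C_4$. Set $x_i=1$ for all $i$, so that $P_k=R_k=1$, and set $z=g^2$, so that $z^2=1$. Then
\[
R_k=P_k=1,\qquad A_k=g^2,\qquad Q_k=g^2,\qquad B_i=(y_iy_{i+1})^2g^2 .
\]
Since the valuation lies in $C_4$, every word has a group value, and all of the above values lie in $\{1,g^2\}$. Choose $y_j\in\{1,g\}$, so that $y_j^2$ lies in $\{1,g^2\}$. Writing $y_j^2=g^{2e_j}$ with $e_j\in\Ftwo$, we get $B_i=g^{2(e_i+e_{i+1}+1)}$.

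The key step is to choose the $e_j$ so that $B_i=1$ exactly for $i\ne m$, that is, $e_i+e_{i+1}=1$ for $i\ne m$ and $e_m+e_{m+1}=0$. Around the cycle the sum of these $k$ constraints is $k-1$, which is even because $k$ is odd. Hence the system is consistent: alternate values starting from $y_{m+1}$ through the path $y_{m+1},\ldots,y_m$, which has an even number $k-1$ of steps, so the endpoints $y_{m+1}$ and $y_m$ agree, as required.

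With this choice, every word of $\poly F_k$ other than $A_k$ and $B_m$ has value $1$, while $Q_k=g^2$. The main obstacle is that $A_k$ also evaluates to $g^2$, so the valuation fails if $A_k\in\poly E$. To handle this, we change $x_1$ to $g^2$ while keeping $z=g^2$. Then
\[
P_k=R_k=g^4=1,\qquad A_k=g^2\cdot g^2=1,\qquad Q_k=g^2,
\]
and the $B_i$ with $i\ne1$ are unchanged. Each of $B_k$ and $B_1$ is affected only through $x_1$ (respectively $x_k$), and $B_1=x_1^2y_1^2y_2^2z$ gains the factor $g^4=1$. So in fact $x_1^2=g^4=1$, and nothing changes except $A_k=x_1^2z=g^2$. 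This shows the modification does not work, and the real obstacle is $A_k$.

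The fix is to take $z=g^2$ and $x_1=g$. Then $x_1^2=g^2$, which gives $A_k=1$, $P_k=R_k=g^2$, and $Q_k=g^4=1$. Now all words are to equal $g^2$ except $B_m$. We have $B_1=g^4(y_1y_2)^2=(y_1y_2)^2$, and $B_i=(y_iy_{i+1})^2g^2$ for $i\ne1$. Reparametrizing, the condition becomes a parity system of the same cyclic type, with a modified constraint at the edge $i=1$. Its consistency again reduces to the parity of $k$, with the flipped constraint compensated by the exceptional edge $m$.

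In the write-up we will do this single parity check carefully, splitting into the cases $m=1$ and $m\ne1$. In both cases all words of $\poly E\subseteq\poly F_k\setminus\{B_m\}$ have value $g^2$, while $Q_k=1$, which finishes the proof.
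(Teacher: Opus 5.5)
Your final valuation fails in exactly the case you identified as the obstacle, namely $A_k\in\poly E$. With $z=g^2$, $x_1=g$ and $x_i=1$ for $i\geq2$, you yourself compute $A_k=1$ and $Q_k=1$. You then assert that every word of $\poly E\subseteq\poly F_k\setminus\{B_m\}$ has value $g^2$, which is false for $A_k$. If $A_k\in\poly E$, the words of $\poly E$ either all equal $1=Q_k$ or take two distinct values, so that $\poly E=\infty$. In both cases $Q_k\absorb\poly E$ holds at this valuation, and nothing is refuted.

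The failure is structural. Commutatively, $Q_k=A_k\cdot x_2^2\cdots x_k^2$. Hence any valuation with $x_2^2\cdots x_k^2=1$ forces $A_k=Q_k$. Your first attempt ($x_i=1$ for all $i$) and your fix both have this property, so adjusting $x_1$ alone can never work. Also, the constraints $R_k=P_k$ and $Q_k\neq P_k$ force $z=g^2$, so $z$ gives you no freedom either.

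The missing idea is to put the odd exponent on some $x_i$ with $i\neq1$. Take $x_1=1$, $x_2=g$, $x_i=1$ for $i\geq3$, and $z=g^2$. Then $P_k=R_k=A_k=g^2$, while $Q_k=P_kz=1$. Write $y_i=g^{v_i}$ with $v_i\in\Ftwo$, and let $u_2=1$ and $u_i=0$ for $i\neq2$. Then $B_i=g^{2(u_i+v_i+v_{i+1}+1)}$, which equals $g^2$ exactly when $v_i+v_{i+1}=u_i$.

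You need these equations only for $i\neq m$. Deleting the $m$th edge of the cycle leaves a path, so the system is solvable by propagating from one endpoint, for every $k$. No parity argument and no constraint at the edge $m$ are required. This is precisely the valuation used in the paper's proof.
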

\begin{proof}
Suppose that $B_j$ is absent. Set $u_2=1$ and $u_i=0$ for
$i\ne2$, so $u_1=0$ and $\sum_i u_i=1$ in $\Ftwo$.
Choose $v_1,\ldots,v_k\in\Ftwo$ satisfying
\begin{equation}\label{eq:path}
v_i+v_{i+1}=u_i\qquad(i\ne j).
\end{equation}
Deleting the $j$th edge of a cycle leaves a path, so these
equations are solvable by choosing one endpoint value and
proceeding along the path. Evaluate in $G$ by
\[
x_i=g^{u_i},\qquad y_i=g^{v_i},\qquad z=g^2.
\]
Then $P_k=R_k=A_k=g^2$. For $i\ne j$,
equation~\eqref{eq:path} gives $B_i=g^2$ as well. But
$Q_k=P_kz=1$. Distinct group elements are incomparable in
the flat additive order, so $Q_k\absorb\poly E$ fails.
\end{proof}

\begin{lemma}\label{lem:substitution}
Let $\TT\models r\absorb\poly s$, where $r\in X_c^+$ and
$\poly s$ is a commutative term. Suppose that a substitution
$\varphi$ and a word $h\in X_c^*$ satisfy
\begin{equation}\label{eq:instance}
h\varphi(\poly s)\subseteq\poly F_k,
\qquad Q_k\in h\varphi(r).
\end{equation}
Then $|\cont(\poly s)|\geq k$.
\end{lemma}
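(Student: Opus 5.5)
Set $\poly E=h\varphi(\poly s)\subseteq\poly F_k$. Since $\TT\models r\absorb\poly s$ and valid identities are closed under substitution and multiplication, $\TT\models h\varphi(r)\absorb h\varphi(\poly s)$. In particular $G\models Q_k\absorb\poly E$, because $Q_k$ is one word of $h\varphi(r)$. Lemma~\ref{lem:locals-required} then gives $B_1,\dots,B_k\in\poly E$. For each $i$, choose a word $s_i\in\poly s$ and images of its variables whose product with $h$ is $B_i$. The plan is to show that the words $s_i$ jointly involve at least $k$ distinct variables of $\poly s$.

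We first analyse $h$ and the images. The word $h$ divides every word of $\poly E$, and these include $B_1,\dots,B_k$. For $k\ge5$ these words have no common $x$- or $y$-variable, so $h\in\{\eps,z\}$. Every word in the image of a variable $v\in\cont(\poly s)$ divides some word of $\poly F_k$, because $\varphi(\poly s)$ is nonempty and is multiplied by $h$ into $\poly F_k$. Next we use $Q_k\in h\varphi(r)$. By the degree bounds of Lemma~\ref{lem:absorbed}, $\cont(r)\subseteq\cont(\poly s)$. Write $Q_k=h\prod_{v}\prod_{\ell=1}^{m(v,r)} w_{v,\ell}$, where $w_{v,\ell}\in\varphi(v)$. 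Each $x_i$ occurs in $Q_k$ with multiplicity $2$, and no word of $\poly F_k$ contains $x_i^4$. So a variable with $m(v,r)\ge2$ contributes words of bounded degree.

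The remark in the introduction suggests the key step. A variable occurring twice in $r$ must have a single-word image, with that word dividing $Q_k$ and hence built from $x$'s and possibly $z$. We then compare $B_i$ with this image. The $x$-part of $B_i$ is $x_i^2$, so the parts of $B_i$ supplied by variables of $r$ can carry at most one $x$-index.

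The counting argument runs as follows. For each $i$, the word $B_i$ contains $y_i^2y_{i+1}^2$. These $y$-letters must come from images of variables of $\poly s$, since $h$ contains no $y$. Assign to each $i$ a variable $v_i\in\cont(s_i)$ whose chosen image word contains $y_i$ or $y_{i+1}$ together with $x_i$, or else $y_i$ alone. Each image word divides a word of $\poly F_k$. A word containing $y$-letters therefore divides some single $B_j$, and so it determines one or two consecutive indices. We must rule out a single variable serving many $i$'s. A variable $v$ whose image contains words dividing $B_i$ and $B_j$ with $|i-j|\ge2$ is dangerous: substituting such mixed words into $r$ would produce words of $h\varphi(r)$ of the wrong shape. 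To exclude this, I would use validity of $r\absorb\poly s$ in $H^0$ and $G$ after substitution. The shape restrictions of Lemma~\ref{lem:absorbed} apply to $h\varphi(r)$, since $\TT\models h\varphi(r)\absorb\poly F_k$, so every word of $h\varphi(r)$ lies in $\poly F_k+Q_k$. Choosing a different word of $\varphi(v)$ at an occurrence of $v$ inside the derivation of $Q_k$ must again give a word of $\poly F_k+Q_k$.

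The main obstacle is this step, which turns the word-choice freedom into an injectivity-type bound. From it I want to conclude that each variable of $\poly s$ covers only boundedly many cycle indices, ideally at most one. If $\varphi(v)$ contains a $y$-word, swapping it into the factorisation of $Q_k$ either yields a word containing a $y$ together with all $x_i$, which is excluded by Case~1 of Lemma~\ref{lem:absorbed}, or forces $v\notin\cont(r)$. Variables outside $r$ contribute no constraint from $Q_k$. For those I would argue directly: two words of $\poly s$ cannot be combined to let one variable produce both $B_i$ and $B_j$ while the rest of $s_i$ is fixed, because the remaining letters of $B_i$ and $B_j$ differ. If only a bounded-to-one map $i\mapsto v_i$ results, I would refine the argument to exactly one index per variable using the parity of $k$, mirroring the odd-cycle argument of Lemma~\ref{lem:locals-required}. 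Either way, the map gives $|\cont(\poly s)|\ge k$.
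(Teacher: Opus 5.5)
Your first step is correct: since $Q_k\in h\varphi(r)$ and $\TT\models h\varphi(r)\absorb h\varphi(\poly s)$, Lemma~\ref{lem:locals-required} puts every $B_i$ in $h\varphi(\poly s)$. The gap is the step you yourself call the main obstacle, and the route you sketch for it cannot succeed.

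Membership $B_i\in h\varphi(\poly s)$ only says that $B_i$ arises from some word of $\poly s$ under some choice of image words. Different indices may use different words of the same $\varphi(v)$. Here is a concrete instance:
\begin{itemize}
\item Take the valid case $r=Q_k$, $\poly s=\poly F_k$, $\varphi$ the identity, $h=\eps$.
\item Add to $\poly s$ a new variable $v$ with $\varphi(v)=B_1+\cdots+B_k$.
\item All hypotheses still hold, because enlarging $\poly s$ preserves $r\absorb\poly s$.
\item Yet your recipe may select $s_i=v$ for every $i$.
\end{itemize}
So the intermediate claim you aim for, that each variable of $\poly s$ covers only boundedly many indices, is false. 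So is your assertion that one variable cannot produce both $B_i$ and $B_j$. Swapping words into the factorisation of $Q_k$ gives no information about such a $v\notin\cont(r)$. In any case, a $c$-to-one map would give only about $k/c$ variables, not $k$, and your parity refinement is unspecified.

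What is missing is a coherent choice of images, made \emph{before} Lemma~\ref{lem:locals-required} is applied.
\begin{enumerate}
\item First prove, rather than quote from the introduction, that a variable $v$ with $\degof{v}{r}=2$ has a single-word image. Each word of $\varphi(v)$ has all degrees at most $1$, by Lemma~\ref{lem:absorbed} applied to $h\varphi(r)$. All $x$- and $y$-degrees in $\poly F_k$ are even. No two words of $\poly F_k$ differ by a single factor $z$.
\item The expansion producing $Q_k$ then defines a word substitution $\psi$ with $h\psi(r)=Q_k$ and $h\psi(\poly s)\subseteq\poly F_k$. Variables outside $r$ get arbitrary single image words.
\item Validity of $r\absorb\poly s$ in $G$ gives $G\models Q_k\absorb h\psi(\poly s)$. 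Lemma~\ref{lem:locals-required}, applied to $h\psi(\poly s)$, now yields words $s_i$ with $h\psi(s_i)=B_i$, with one image word per variable.
\item Since $h$ divides every $B_i$, it contains no $x$-variable. Hence some variable $v_i$ of $s_i$ has $x_i\in\cont(\psi(v_i))$.
\item If $v_i=v_j$ for some $j\ne i$, then $\psi(v_i)$ would divide $B_j$, which contains no $x_i$. So $v_1,\ldots,v_k$ are distinct.
\end{enumerate}
This is the paper's proof. In the example above, $\psi(v)$ is a single $B_j$, so $v$ serves at most one index.
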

\begin{proof}
Every word of $h\varphi(r)$ is absorbed by $\poly F_k$ in
$\TT$, because $r\absorb\poly s$ and
$h\varphi(\poly s)\subseteq\poly F_k$. Therefore
Lemma~\ref{lem:absorbed} gives
\begin{equation}\label{eq:all-images}
h\varphi(r)\subseteq\poly F_k+Q_k.
\end{equation}
Every letter has degree at most $2$ in the words on the right.
For each variable $v\in\cont(r)$, choosing the same
nonempty image word at all its occurrences shows that
\begin{equation}\label{eq:source-degree}
\degof{v}{r}\leq2.
\end{equation}
Also, $\cont(r)\subseteq\cont(\poly s)$. Otherwise a variable
occurring only in $r$ could be assigned $\infty$ in $G$, with
all other variables of $r$ and $\poly s$ assigned $1$, contradicting
$r\absorb\poly s$.

We claim that if $\degof{v}{r}=2$, then $\varphi(v)$ consists
of a single commutative word. By~\eqref{eq:all-images}, every
word in $\varphi(v)$ is supported on the $x$-variables,
the $y$-variables, and $z$, and every letter has degree at most
$1$ in it: choosing that word at both occurrences of $v$ gives
the asserted bound. Choose an occurrence of $v$ in a word of
$\poly s$ and fix all other choices in an expansion. Replacing
the choice at this occurrence by $w,w'\in\varphi(v)$ gives
two words in $\poly F_k$ after multiplication by $h$.
Every $x$- and $y$-degree in $\poly F_k$ is even. Thus the
corresponding degrees of $w$ and $w'$ differ by an even number.
They are each $0$ or $1$, so they are equal.

If the $z$-degrees of $w$ and $w'$ differed, they would differ
by exactly $1$. The two resulting words of $\poly F_k$ would
then differ only by one occurrence of $z$. No such pair exists:
the only distinct words in $\poly F_k$ with the same
$x$- and $y$-degrees are $P_k$ and $R_k$, whose $z$-degrees
differ by $2$. Therefore the $z$-degrees also agree and
$w=w'$, proving the claim.

Take an expansion in~\eqref{eq:instance} producing $Q_k$.
Each variable occurring once in $r$ has one selected
image word, and the preceding claim ensures that a variable
occurring twice has the same word selected at both occurrences.
Choose those images and choose arbitrary image words for the
remaining variables. This defines a word substitution
$\psi$ with
\begin{equation}\label{eq:coherent}
h\psi(r)=Q_k,
\qquad h\psi(\poly s)\subseteq\poly F_k.
\end{equation}
The identity $r\absorb\poly s$ is valid in $G$, so
$G\models Q_k\absorb h\psi(\poly s)$. By
Lemma~\ref{lem:locals-required}, every $B_i$ occurs in
$h\psi(\poly s)$. Hence $h$ divides all the $B_i$ and
contains no $x$-variable.

For each $i$, choose a word $s_i\in\poly s$ such that
$h\psi(s_i)=B_i$. Some variable $v_i$ of $s_i$ has an image
$\psi(v_i)$ containing $x_i$. This image divides $B_i$.
For $j\ne i$ the word $B_j$ contains no $x_i$, so the same
variable cannot be used as $v_j$. The variables
$v_1,\ldots,v_k$ are distinct members of $\cont(\poly s)$.
Thus $|\cont(\poly s)|\geq k$.
\end{proof}

\begin{theorem}\label{thm:criterion}
Let $\mathcal W$ be an ai-semiring variety containing $H^0$
and $G$. If $\mathcal W$ satisfies $\sigma_k$ for infinitely
many $k\in\mathcal N$, then $\mathcal W$ has no equational
basis whose identities use a uniformly bounded number of
variables. In particular, $\mathcal W$ is nonfinitely based.
\end{theorem}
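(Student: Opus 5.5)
Suppose, for a contradiction, that $\mathcal W$ has an equational basis $\Sigma$ whose identities each use at most $n$ variables. Choose $k\in\mathcal N$ with $k>n$ such that $\mathcal W\models\sigma_k$. Since $\mathcal W$ contains $H^0$ and $G$, we have $\TT\subseteq\mathcal W$, so every identity of $\Sigma$ holds in $\TT$.

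The first step is to pass to the commutative setting. Let $\Sigma^c$ consist of the commutative images of the identities in $\Sigma$, together with $xy\approx yx$. Then $\Sigma^c$ axiomatizes $\mathcal W\cap\CAI$. This variety satisfies the commutative image $Q_k\absorb\poly F_k$ of $\sigma_k$, and its identities still use at most $\max(n,2)$ variables. We may assume $k>\max(n,2)$. Replace each identity of $\Sigma^c$ by the equivalent family of inequalities $r\absorb\poly s$ with $r$ a word, as described in Section~\ref{sec:preliminaries}; this introduces no new variables. Every such inequality holds in $\TT$ and satisfies $|\cont(\poly s)|\le\max(n,2)<k$. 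The commutativity identity holds in $\TT$ and has only two variables, so it is covered by the same bound.

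Next, consider a deduction of $\poly F_k\approx\poly F_k+Q_k$ from $\Sigma^c$. By Lemma~\ref{lem:deduction} it is a finite chain of elementary steps \eqref{eq:deduction}. Every term in the chain is $\TT$-equivalent to $\poly F_k$, because each step uses an identity valid in $\TT$. By Lemma~\ref{lem:class}, every term in the chain is therefore $\poly F_k$ or $\poly F_k+Q_k$. Since the chain begins at $\poly F_k$ and ends at $\poly F_k+Q_k$, some single step passes from $\poly F_k$ to $\poly F_k+Q_k$ (or the reverse; the step is symmetric). In that step both sides are among these two terms, so
\[
\poly l+\poly K\varphi(\poly s)=\poly F_k,\qquad
\poly l+\poly K\varphi(\poly s)+\poly K\varphi(r)=\poly F_k+Q_k .
\]
Hence $\poly K\varphi(\poly s)\subseteq\poly F_k$, while $Q_k\in\poly K\varphi(r)$ and $Q_k\notin\poly F_k$.

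The main step is to extract a single word $h\in\poly K$ so that Lemma~\ref{lem:substitution} applies. Since $Q_k\in\poly K\varphi(r)$, there is some $h\in\poly K$ with $Q_k\in h\varphi(r)$. Because $h\varphi(\poly s)\subseteq\poly K\varphi(\poly s)\subseteq\poly F_k$, condition \eqref{eq:instance} holds for this $h$. If $\poly K$ is absent, it is $\{\eps\}$ and we take $h=\eps$. Lemma~\ref{lem:substitution} then gives $|\cont(\poly s)|\ge k$, contradicting the bound of the previous paragraph.

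I expect the only delicate point to be the passage from a noncommutative basis to the commutative deduction. The issue is that the noncommutative identities may be stated for arbitrary noncommutative terms, and one must check that taking commutative images preserves both the variable bound and validity in $\TT$. Both hold because $\TT\subseteq\CAI$. Finally, "in particular" follows because a finite basis uses boundedly many variables.
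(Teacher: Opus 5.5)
Your proof is correct and follows essentially the same route as the paper: pass to commutative terms and word-absorption inequalities with the same variable bound, use Lemma~\ref{lem:class} to confine the deduction to $\poly F_k$ and $\poly F_k+Q_k$, isolate the step that adds $Q_k$, pick $h\in\poly K$, and apply Lemma~\ref{lem:substitution}. Your ``so'' fixing the orientation of that step (that $\poly l+\poly K\varphi(\poly s)$ is $\poly F_k$ and not $\poly F_k+Q_k$) is justified by $\poly l+\poly K\varphi(\poly s)\subseteq\poly l+\poly K\varphi(\poly s)+\poly K\varphi(r)$, which is the paper's remark that a contraction introduces no word; you could state this explicitly.
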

\begin{proof}
Suppose that $\Sigma$ is such a basis, with variable bound $m$.
Choose $k\in\mathcal N$ with $k>m$ and
$\mathcal W\models\sigma_k$. There is a deduction of
$\sigma_k$ from $\Sigma$.

Adjoin the identity $xy\approx yx$ to the deduction system
and work with commutative terms. A deduction remains
valid in this enlarged system, and all its identities hold in
$\TT$, because $\TT\subseteq\mathcal W$ and $\TT\subseteq\CAI$.
Replace each basis identity by word absorption inequalities.
The resulting set still has variable bound $m$.

By Lemma~\ref{lem:deduction}, a sequence of contextual
substitution instances takes $\poly F_k$ to $\poly F_k+Q_k$.
All intermediate terms are equivalent to $\poly F_k$ in
$\TT$. Lemma~\ref{lem:class} therefore leaves only the two
possible terms $\poly F_k$ and $\poly F_k+Q_k$.

Consider the first step changing the former into the latter.
A contraction in~\eqref{eq:deduction} introduces no word,
so this step expands an inequality $r\absorb\poly s$:
\[
\poly l+\poly K\varphi(\poly s)
\longrightarrow
\poly l+\poly K\varphi(\poly s)+\poly K\varphi(r).
\]
Choose a word $h$ of the multiplicative context that produces
$Q_k$; if the context is absent, take $h=\eps$. The left side
gives $h\varphi(\poly s)\subseteq\poly F_k$, while
$Q_k\in h\varphi(r)$. Lemma~\ref{lem:substitution} implies
$|\cont(\poly s)|\geq k>m$, a contradiction.
\end{proof}

\begin{corollary}\label{cor:product25}
The $25$-element commutative ai-semiring $H^0\times G$ is
nonfinitely based. Its generated variety has no basis with a
uniform bound on the number of variables.
\end{corollary}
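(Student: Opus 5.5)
The corollary follows from Theorem~\ref{thm:criterion} once we put $\mathcal W=\Var(H^0\times G)$. Three things must be checked: $\mathcal W$ is a variety of ai-semirings, it contains both $H^0$ and $G$, and it satisfies $\sigma_k$ for infinitely many $k\in\mathcal N$. For the first point, $H^0$ and $G$ are commutative ai-semirings (Section~\ref{sec:test}), and direct products preserve identities. Hence $H^0\times G$ is a commutative ai-semiring with $5\cdot5=25$ elements.

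Next we show $\mathcal W=\TT$. Clearly $\Var(H^0\times G)\subseteq\Var(H^0,G)=\TT$. For the reverse inclusion, we obtain each factor as a homomorphic image of the product. In our constant-free signature, a projection $H^0\times G\to H^0$ is a surjective homomorphism, since both factors are nonempty. The same holds for the projection onto $G$. Thus $H^0,G\in\mathcal W$, so $\TT\subseteq\mathcal W$ and the two varieties are equal. This step carries no real difficulty; the only point needing care is that the absence of constants does not obstruct surjectivity of the projections.

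Lemma~\ref{lem:test-validity} now gives $\mathcal W=\TT\models\sigma_k$ for every $k\in\mathcal N$, an infinite set. Theorem~\ref{thm:criterion} then says that $\mathcal W$ has no equational basis with a uniformly bounded number of variables. A finite basis would have such a bound, namely the maximum number of variables occurring in its identities. Hence $H^0\times G$ is nonfinitely based.

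There is no genuine obstacle here, since the substantive work is already in Theorem~\ref{thm:criterion} and Lemma~\ref{lem:test-validity}. The one place to be careful is to apply Theorem~\ref{thm:criterion} to the variety generated by the single finite algebra $H^0\times G$, which is exactly $\TT$. The finite basis property of an algebra refers to its generated variety, so the conclusion about that variety is the statement we want.
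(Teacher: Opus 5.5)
Your proposal is correct and matches the paper's argument. The paper also identifies $\Var(H^0\times G)$ with $\TT$ and then applies Lemma~\ref{lem:test-validity} and Theorem~\ref{thm:criterion}; your projection argument just spells out the equality of varieties that the paper asserts without comment.
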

\begin{proof}
Its generated variety is $\TT$. Apply
Lemma~\ref{lem:test-validity} and Theorem~\ref{thm:criterion}.
\end{proof}

\section{The five-element generators}
\label{sec:five}

For a semigroup $S$, write
\[
 E(S)=\{e\in S:e^2=e\},\qquad
 Q(S)=\{x^2:x\in S\},\qquad K(S)=\langle Q(S)\rangle.
\]
The brackets here denote multiplicative subsemigroup generation.

\subsection{Products of squares in small semigroups}

\begin{lemma}\label{lem:cyclic-codimension}
If a semigroup has a cyclic subsemigroup with at most one element
outside it, then its set of squares is a subsemigroup.
\end{lemma}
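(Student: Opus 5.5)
The plan is to split according to whether the cyclic subsemigroup $C=\langle c\rangle$ is all of $S$. The guiding observation is that in a \emph{commutative} semigroup the squares always form a subsemigroup, because $x^2y^2=(xy)^2$. If $S=C$, then $S$ is cyclic, hence commutative, and we are done at once. So assume $S=C\cup\{d\}$ with $d\notin C$. Any two elements of $C$ commute, and then $c^{2i}c^{2j}=(c^{i+j})^2$. If moreover $d$ commutes with $c$, then $d$ commutes with every power of $c$, so $S$ is commutative and the same one-line argument applies. Hence the real content is the case $cd\neq dc$.

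In that case I would first pin down the multiplication involving $d$, using associativity together with the fact that $S$ has only one element outside $C$.
\begin{itemize}
\item Each of $cd$, $dc$, $d^2$ is either $d$ or a power of $c$.
\item If $cd=d$, then $c^jd=d$ for all $j$. If instead $cd=c^a$, then $c^jd=c^{a+j-1}$.
\item The same dichotomy holds for $dc$.
\item Associativity of $(cd)c=c(dc)$ and $d(cd)=(dc)d$ rules out most combinations. For instance, $cd=d$ and $dc=c^b$ would give $dc=c(dc)=c^{b+1}$, which forces $c^b=c^{b+1}$, so $c^b$ is the idempotent of an index-$b$, period-$1$ monogenic semigroup.
\end{itemize}
I expect this to leave a short list of noncommutative types, essentially those in which $d$ acts as a left or right identity on $C$ while the other product lands in the idempotent of $C$, or those in which $d$ behaves like a left or right zero for $C$.

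For each surviving type I would verify closure of $Q(S)$ directly. The squares are the $c^{2i}$, together with $d^2\in\{d\}\cup C$. The only products still to check are $c^{2i}d^2$ and $d^2c^{2i}$ (and $d^2d^2$ when $d^2=d$). In each case I would exhibit an explicit square equal to the product: typically $d^2$ itself, or $(c^id)^2$, or the square of the idempotent power of $c$. For example, if $d$ is a left identity on $C$ and $cd=c^b=c^{b+1}$ is idempotent, then $c^{2i}d=c^b$, which is the square of $c^b$.

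The main obstacle is making this case analysis of $cd$, $dc$, $d^2$ genuinely exhaustive without an unmanageable split on the index and period of $C$. The subtle point is that when $d^2=c^m$ with $m$ odd and $C$ has even period, a product such as $c^{2i}d^2=c^{2i+m}$ need not be a square in $C$. So I must show that associativity forces such configurations to be commutative, or to collapse onto the idempotent of $C$. My first attempt would be to use $d^2c=d(dc)$ and $cd^2=(cd)d$ to show that $d^2=c^m$ forces $cd,dc\in C$, and hence that $d$ commutes with $c$, reducing to the commutative case.
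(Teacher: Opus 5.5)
Your reductions to the commutative case ($S=\langle c\rangle$, or $cd=dc$) are correct. The noncommutative case, however, is the whole content of the lemma, and you leave it as a forecast. The step you propose for its crux is false: $d^2\in\langle c\rangle$ does not force $d$ to commute with $c$.

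Here is a counterexample. Let $S=\{c,c^2,0,d\}$ with $cc=cd=dd=c^2$, $dc=0$, and all remaining products equal to $0$. Every product of two elements lies in $\{c^2,0\}$, and both of these annihilate everything. Hence every product of three elements is $0$, and $S$ is a semigroup. In it:
\begin{itemize}
\item $\langle c\rangle=\{c,c^2,0\}$ omits only $d$;
\item $d^2=c^2\in\langle c\rangle$ and $cd,dc\in\langle c\rangle$;
\item yet $cd\neq dc$.
\end{itemize}
The lemma does hold here, since the squares are $\{c^2,0\}$, but not for your reason. This example is none of the types you anticipate: $d$ is neither a one-sided identity nor a one-sided zero for $\langle c\rangle$. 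So configurations with $d^2\in\langle c\rangle$ cannot be pushed into the commutative case. Your case analysis, as planned, would leave them untreated.

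The missing idea, which is exactly the paper's proof, is never to locate $d^2$ inside $\langle c\rangle$. Instead, peel off one factor $d$ at a time by associativity. Since $S=\langle c\rangle\cup\{d\}$, either $dc=d$, whence $dc^j=d$ for all $j$, or $dc=c^b$, whence $dc^j=c^{b+j-1}$. Therefore
\[
d^2c^{2i}=d\,(dc^{2i})\in\{d^2,\ c^{2b+2i-2}\}.
\]
Both values are squares, because $c^{2b+2i-2}=(c^{b+i-1})^2$. The dual computation $c^{2i}d^2=(c^{2i}d)d$ uses $cd$ in the same way. Together with $c^{2i}c^{2j}=(c^{i+j})^2$ and $d^2d^2=(d^2)^2$, this closes the set of squares. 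It needs no classification, no commutativity, and no index or period bookkeeping.

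It also settles your parity worry: a nonsquare value of $c^{2i}d^2$ never occurs. Your own relation $cd^2=(cd)d$ already excludes the configuration you feared, namely $d^2=c^m$ with $m$ odd and even period:
\begin{itemize}
\item if $cd=c^a$, it gives $c^{2a-1}=c^{m+1}$, which is impossible by parity when the period is even;
\item if $cd=d$, it gives $c^{m+1}=c^m$, forcing period $1$.
\end{itemize}
That configuration is therefore ruled out by contradiction, not by forcing commutativity.
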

\begin{proof}
Only $S=\langle a\rangle\cup\{x\}$ needs consideration.
Every square is either an even power of $a$ or $x^2$.
Products of two even powers are squares, as is $(x^2)^2$.
For a mixed product, if $xa=x$, then $x^2a^{2i}=x^2$.
Otherwise $xa=a^j$ for some $j\geq1$, and
\[
 x^2a^{2i}=x(xa^{2i})
 =xa^{j+2i-1}=a^{2j+2i-2}
 \qquad(i\geq1).
\]
This is again a square. The argument for $a^{2i}x^2$ is the
left--right dual.
\end{proof}

\begin{lemma}\label{lem:orthodox-small}
Let $S$ be a semigroup of order at most five. If $E(S)$ is a
subsemigroup, then $Q(S)$ is a subsemigroup.
\end{lemma}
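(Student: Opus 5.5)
The proof will be a case analysis on the structure of $S$, organised so that Lemma~\ref{lem:cyclic-codimension} disposes of as many cases as possible. First, if $S$ contains an element $a$ with $|\langle a\rangle|\geq|S|-1$, Lemma~\ref{lem:cyclic-codimension} applies directly. Since $|S|\leq5$, we may therefore assume that every monogenic subsemigroup has order at most $|S|-2\leq3$. We then argue by contradiction: suppose $u^2v^2$ is not a square for some $u,v\in S$. Because $Q(S)$ is closed under squaring, $u^2\ne v^2$, and neither of $u^2,v^2$ is a left or right identity for the other's product in a way that would make $u^2v^2\in\{u^2,v^2\}$.

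The key step exploits the hypothesis that $E(S)$ is a subsemigroup. Each square $u^2$ lies in $\langle u\rangle$, which has order at most $3$. Hence $u^2$ is either idempotent, or $u$ has index $\geq2$ with $u^2$ in a small tail or a $2$-cycle. I would list the possible shapes of $\langle u\rangle$ of order at most $3$: $\{u\}$, $\{u,u^2=u^3\}$, the group $C_2$, $C_3$, $\{u,u^2,u^3=u^2\}$, and $\{u,u^2\}$ with $u^3=u$. I would then record what $u^2$ can be in each case. In most cases $u^2$ is an idempotent, or a group element whose square is idempotent. If both $u^2$ and $v^2$ are idempotent, then $u^2v^2\in E(S)$ by hypothesis, and an idempotent is its own square, which is a contradiction. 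So at least one of them, say $u^2$, is non-idempotent. Given the monogenic bound, this forces $\langle u\rangle\cong C_3$, or $u^2$ generating a subgroup $C_3$, or $u^2$ lying in a nilpotent tail $u,u^2,u^3$ of length $3$. That uses up at least three elements, so the remaining one or two elements must accommodate $v$ and the product $u^2v^2$.

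A short count then closes the argument. In the $C_3$ case, $e=u^3$ is an identity of the group $H_e$. The product $u^2v^2$ lies outside $\{u,u^2,e\}$, because all of these are squares in $C_3$. Only two further elements are available, and these must contain $v$ (if $v\notin H_e$), $v^2$, and $u^2v^2$. Using $e v^2$, $v^2 e$ together with $E(S)$ being a subsemigroup, and multiplying $u^2v^2$ by powers of $u$ to generate more new elements, I expect a sixth element to be forced. Alternatively, a monogenic subsemigroup of order at least $4$ appears, and then Lemma~\ref{lem:cyclic-codimension} applies. The tail case ($u^3$ idempotent or zero-like) is handled similarly.

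The main obstacle is organising this finite case analysis so that it stays short and avoids silently missing configurations. The delicate subcases are those where $v^2$ is also non-idempotent while $u^2v^2$ falls back into one of the few available slots without being a square. If the structural counting does not close cleanly, I would fall back on the fact, standard and verifiable by enumeration, that semigroups of order at most five are listed up to (anti)isomorphism. A direct check of the orthodox-type ones (those with $E(S)$ a subsemigroup) would confirm the claim, with the structural argument above serving to cut down the number of tables to examine.
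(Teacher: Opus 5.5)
\textbf{Verdict: genuine gap.} Your reduction matches the paper's opening, but the case analysis that makes up the lemma is not carried out, and the mechanism you predict does not work in the $N_3$ case.

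The opening agrees with the paper. Lemma~\ref{lem:cyclic-codimension} reduces to the case where every monogenic subsemigroup has order at most three. Two idempotent squares have an idempotent, hence square, product. So a failure needs a nonidempotent square, which can only come from a copy of $C_3$ or of $N_3=\{a,b,e\}$ with $a^2=b$, $a^3=a^4=e$. Your list of monogenic shapes is garbled: it repeats two shapes, writes the nilpotent one as $u^3=u^2$, and omits $\{u,u^2,u^3\}$ with $u^4=u^2$. The omission is harmless, since $u^2$ is idempotent there. After this point nothing is proved: you offer ``I expect a sixth element to be forced'', ``the tail case is handled similarly'', and an enumeration that is proposed but not performed. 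A computer check of all semigroups of order at most five would count as a proof only if actually carried out and reported.

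The counting mechanism you predict cannot close the $N_3$ case. There $a$ is itself a nonsquare, so for a square $c\notin N_3$ the offending product $bc$ could a priori equal $a$, which uses no new element. In the configuration $S=N_3\cup\{c,d\}$ with $d^2=c$ and $d^3=e$, all five elements are already present, so $bc=d$ costs nothing either. Neither of your two predicted outcomes (a sixth element, or a monogenic subsemigroup of order at least four) is what excludes these. They must be killed algebraically, as the paper does:
\begin{itemize}
\item In that configuration $e$ is a zero and every element has a power equal to $e$. So $bc=a$ gives $a=a(ac)=a(ac)^m=e$, and $bc=d$ gives $d=(bd)d=(bd)^m d=e$.
\item When $c$ is an idempotent square, $cb=a$ forces $ca=c(cb)=a$ and then $cb=(ca)a=b$.
\item For $cb=d$ with $c$ idempotent, put $v=ca$, so $va=cb=d$. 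Elimination forces $v=d$ and $da=d$. Then $d=da^3=de=ce\in E(S)$, contradicting that $d$ is a nonsquare.
\end{itemize}
Your intuition is closer in the $C_3$ case. There the paper shows that if $c$ is idempotent with $ec=c$, the group $H$ acts on $c$ with an orbit whose size divides $3$ but which lies in a set of at most two elements; this is your ``sixth element''. If instead $ec=e$, or $c=d^2$ with $d^3=e$, one gets $gc=g$ for all $g\in H$ directly, and dually on the right.
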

\begin{proof}
If a cyclic subsemigroup has at least four elements, apply
Lemma~\ref{lem:cyclic-codimension}. We may therefore suppose that
every cyclic subsemigroup has order at most three. If every square
is idempotent, there is nothing to prove. A nonidempotent square
must then come from one of the following two cyclic subsemigroups:
\[
 C_3,\qquad N_3=\{a,b,e\},\quad a^2=b,\quad a^3=a^4=e.
\]
This follows immediately by considering the index and period of
a cyclic semigroup of order at most three.

First suppose that $S$ contains a group $H\cong C_3$, with identity
$e$. All elements of $H$ are squares. Unless exactly one element
of $S\setminus H$ is a square, $Q(S)$ is either $H$ or $S$.
Write $S\setminus H=\{c,d\}$ in the remaining case, with
$c\in Q(S)$ and $d\notin Q(S)$; a missing second element would
already be covered by Lemma~\ref{lem:cyclic-codimension}.
If $c$ is idempotent, closure of $E(S)$ gives
$ec,ce\in\{e,c\}$. If $ec=e$, then $gc=g$ for every $g\in H$.
If $ec=c$, the group $H$ acts on $c$ by left multiplication.
Its orbit is disjoint from $H$ and has at most two elements, so
it has one element; thus $gc=c$. The same argument on the right
shows that all mixed products lie in $H\cup\{c\}$.
If $c$ is not idempotent, its square root must be $d$.
The cyclic subsemigroup of $d$ cannot be a group of order three,
since $d$ is not a square. Consequently
$d^2=c$ and $d^3=d^4=e$, giving $ec=ce=e$ and again
$gc=cg=g$. Thus $Q(S)$ is closed in this case.

It remains to consider $N_3\subseteq S$. The element $a$ is not
a square: if $a=x^2$, a cyclic subsemigroup of order at most
three containing $x$ would either make $a$ a group element or
make $a^2$ idempotent. Both are impossible in $N_3$.
If there is at most one element outside $N_3$, use
Lemma~\ref{lem:cyclic-codimension}. Otherwise write the two
remaining elements as $c,d$. If neither is a square, then
$Q(S)=\{b,e\}$ is closed.

Suppose that only $c$ is a square. If $c$ is nonidempotent, its
root is $d$, and the same cyclic-semigroup argument gives
$d^2=c$ and $d^3=d^4=e$. In this case $e$ is a zero of $S$,
and every element has a power equal to $e$.
The products $bc$ and $cb$ cannot equal $a$ or $d$.
For example, $bc=a$ gives $a=a(ac)$, and $bc=d$ gives
$d=(bd)d$; iterating the indicated multiplier gives $a=e$ or
$d=e$. The two reversed products are treated in the same way.
Hence $Q(S)=\{b,c,e\}$ is closed.

If instead $c$ is idempotent, then
$ce,ec\in E(S)=\{c,e\}$. We show that $cb\in Q(S)$.
If $cb=a$, idempotence of $c$ gives $ca=a$, whence
$cb=ca^2=(ca)a=b$, a contradiction.
If $cb=d$, put $v=ca$. Then $va=d$.
The possibilities $v=a,b,e$ give $va=b,e,e$, respectively.
The possibility $v=c$ would give $cb=c$.
Thus $v=d$ and $da=d$. It follows that
\[
 d=da^3=de=cbe=ce\in E(S),
\]
contrary to $d\notin Q(S)$. The proof for $bc$ is dual.

Finally, suppose that both $c$ and $d$ are squares. If either
is nonidempotent, its root must be the other, and these two
elements together with $e$ form a group of order three.
Indeed, a cyclic $N_3$ would make its generator nonsquare by
the argument already used for $a$.
Since $be=eb=e$, multiplication of $b$ by an element of this
group gives that group element. Closure follows.
If $c,d$ are both idempotent, closure of $E(S)$ handles all
products except those involving $b$. For an idempotent $h$,
$bh=a$ would imply $ah=a$ and hence
$bh=a^2h=a(ah)=b$. Similarly $hb\ne a$.
As $a$ is the only nonsquare, this completes the proof.
\end{proof}

\begin{lemma}\label{lem:nonsquare-products}
Let $S$ be an ai-semiring of order at most five. If
$b\in K(S)\setminus Q(S)$, then
\begin{equation}\label{eq:right-stable}
 bz=bz^2\qquad\text{for every }z\in S.
\end{equation}
\end{lemma}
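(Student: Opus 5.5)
Since $b\notin Q(S)$ while $b\in K(S)$, the multiplicative semigroup of $S$ has a set of squares that is not a subsemigroup. By Lemma~\ref{lem:orthodox-small}, $E(S)$ is then not a subsemigroup either. Lemma~\ref{lem:cyclic-codimension} additionally shows that every cyclic subsemigroup of $S$ has at most $|S|-2\leq3$ elements. The plan is to turn these two facts into a short list of possible multiplicative configurations. We then use the additive structure, through distributivity and idempotence, to force $bz=bz^2$.

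First, take idempotents $e,f$ with $ef\notin E(S)$. We may assume $b$ is written as a product $s_1\cdots s_m$ of squares with $m\geq2$, and, after regrouping, $b=cd$ with $c,d\in K(S)$. The basic additive tool is that for any $u$ with $u^2$ defined, $(u+u^2)^2=u^2+u^3+u^4$. More generally, in an ai-semiring every element generates a finite subsemiring whose additive order interacts with powers. In particular, for every $z$, the element $z+z^2$ satisfies $(z+z^2)^2=z^2+z^3+z^4$. We will exploit this identity to compare $bz$ and $bz^2$.

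Second, because cyclic subsemigroups have order at most three and $|S|\leq5$, we enumerate the possible index/period data of each $z$:
\begin{itemize}
\item $z$ idempotent;
\item $z$ in a group of order two or three;
\item $z$ nilpotent-like ($z^2=z^3$);
\item $z$ generating $N_3$ ($z^3=z^4$).
\end{itemize}
For idempotent $z$ the claim is trivial. For the other cases we aim to show that $b$ absorbs the relevant difference. We expect that $bz^2=bz^3$, and more generally that $b$ times any element of the kernel-like ideal generated by high powers is fixed. Then $bz=bz^2$ should follow from an additive argument of the form
\[
bz+bz^2=b(z+z^2)=\cdots,
\]
combined with squaring $z+z^2$ and using that $b$ is a product of squares, so $b=b'(u^2)$ for some square $u^2$.

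Third, we need a structural fact. A non-orthodox semigroup of order at most five with short cycles has very few shapes. Two idempotents $e,f$ with $ef$ nonidempotent and $|\langle ef\rangle|\leq3$ occupy at least four elements, namely $e$, $f$, $ef$ and $(ef)^2$, leaving at most one more element. We would classify these small configurations, for instance $B_2$-like or $A_2$-like pieces with a zero, by checking which products of squares are nonsquares. In each configuration we then verify $bz=bz^2$ directly. The additive structure enters when we rule out configurations in which $b\cdot z$ with $z$ a group element of order two would permute, since a flat or chain addition compatible with distributivity forces collapse.

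We expect the main obstacle to be this case enumeration. It means showing that in every admissible five-element configuration, the nonsquare products of squares lie in an ideal on which right multiplication by $z$ and by $z^2$ agree. The hardest subcase should be the one where $S$ contains a two-element group acting on the right of $b$. There we would try first to use idempotence of addition: $b z + b = b(z+z^2)$ when $z^2$ is the group identity fixing $b$. Combined with closure under the squaring map, this should show that $b$ is fixed by $z$.
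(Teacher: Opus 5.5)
\textbf{There is a genuine gap: the proof itself is not carried out.} Your setup is correct. Lemma~\ref{lem:orthodox-small} gives idempotents $e,f$ with $v=ef$ nonidempotent, Lemma~\ref{lem:cyclic-codimension} bounds cyclic subsemigroups by three elements, and $e,f,v,v^2$ are distinct. After that, everything is only announced (``we would classify'', ``should follow'', ``we expect''), and that classification \emph{is} the proof. The general tools you list do not replace it:
the identity $(z+z^2)^2=z^2+z^3+z^4$ is never connected to $bz$;
$bz^2=bz^3$ is a consequence of the claim, not a route to it;
your list of index/period types for $z$ omits $z^4=z^2\neq z^3$.

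What is missing is a concrete determination of the multiplication. The paper gets it from $w=fe$, $r=efe$, $s=fef$. None of these lies in $\{e,f,v\}$, so with at most two further elements two of them must coincide, and associativity then fixes the subsemigroup generated by $e,f$. When exactly two of $w,r,s$ coincide, all five elements are accounted for and the nonsquares are $v$ (or $v,w$). The claim is then read off from the table. Only when $w=r=s=o$ is there a four-element subsemigroup $T=\{e,f,v,o\}$ plus one free element $z$, and there $vz$ must be analysed value by value.

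\textbf{The additive mechanism you propose does not handle the critical case.} You are right that the additive structure is indispensable, and right about where it enters. Put $tz=zt=z$ for $t\in T$ and $z^2=o$. This gives a five-element semigroup with $v\in K(S)\setminus Q(S)$ but $vz=z\neq o=vz^2$. Here $z^2=o$ does not fix $b=v$ (indeed $vo=o$), so the relation $b(z+z^2)=bz+b$ is unavailable. Nor is the goal to show $vz=v$; it is to show that this multiplication admits no compatible idempotent addition.

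The missing argument is as follows. The elements $o$ and $z$ are incomparable, since right multiplication by $z$ turns $o\leq z$ into $z\leq o$ and conversely. Yet $v(o+z)=vo+vz=o+z$, and the only fixed points of $x\mapsto vx$ are $o$ and $z$. Hence $o+z\in\{o,z\}$, a contradiction.

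The remaining values of $vz$ then finish the proof:
the values $v$ and $o$ are immediate;
$vz=f$ is excluded by left multiplication by $e$;
$vz=e$ forces $ez=e$, so $vz^2=e$.
None of these steps appears in your proposal.
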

\begin{proof}
Since $Q(S)$ is not a subsemigroup, Lemma~\ref{lem:orthodox-small}
provides idempotents $e,f$ for which $v=ef$ is not idempotent.
The three elements $e,f,v$ are distinct. Put
\[
 w=fe,\qquad r=efe,\qquad s=fef.
\]
None of $w,r,s$ belongs to $\{e,f,v\}$. For $w$ this follows
because $fe=e,f$, or $ef$ would imply $(ef)^2=ef$.
For $r$, equality to $e$ or $v$ gives the same consequence,
whereas $r=f$ and $er=r$ would give $ef=f$.
The argument for $s$ is dual. There are at most two available
elements, so at least one of $w=r$, $w=s$, or $r=s$ holds.

Suppose first that $w=r$. Associativity gives
\[
 w^2=w,\quad s=wf,\quad s^2=s,\quad v^2=v^3=s.
\]
If $s\ne w$, these five distinct elements exhaust $S$, all
except $v$ are idempotent, and the displayed equations prove
\eqref{eq:right-stable} for its only nonsquare $v$.
The case $w=s\ne r$ is dual: again all elements except $v$
are idempotent and $v^2=v^3=r$, which is enough for
\eqref{eq:right-stable} on the right as well.

If $r=s=h\ne w$, the five elements are $e,f,v,w,h$.
Associativity gives
\[
 h^2=h,\qquad v^2=w^2=vw=wv=h,
\]
and $h$ absorbs multiplication by each of these five elements.
The only possible nonsquares are $v,w$. Multiplication by
an idempotent satisfies~\eqref{eq:right-stable} automatically;
multiplication by $v$ or $w$ gives $h$. This proves the claim.

The remaining case has $w=r=s=o$. Then
$T=\{e,f,v,o\}$ is a subsemigroup with table
\[
\begin{array}{c|cccc}
 &e&f&v&o\\\hline
e&e&v&v&o\\
f&o&f&o&o\\
v&o&v&o&o\\
o&o&o&o&o
\end{array}
\]
and its only nonsquare is $v$. If there is an extra element
$z$, any nonsquare in $K(S)$ is still $v$: if $z$ is not a
square then all squares, and hence their products, lie in $T$;
if $z$ is a square it cannot be the element under consideration.
It remains to prove $vz=vz^2$.

The values $vz=v,o$ give the result immediately; for the latter,
$oz=v^2z=v(vz)=o$. The value $vz=f$ is impossible because
$e(vz)=vz$ but $ef=v\ne f$.
If $vz=e$, the equations
\[
 v(zf)=v,\qquad v(ze)=e
\]
and the displayed table force $zf=f$ and $ze=z$. Hence
$zv=zef=f$, and $(ez)v=e(zv)=v$ forces $ez=e$.
Therefore $vz^2=ez=e=vz$.

Finally suppose that $vz=z$. Then $oz=z$, and the fixed
elements of left multiplication by $v$ are exactly $o,z$.
Since $v z^2=z^2$, we have $z^2\in\{o,z\}$.
If $z^2=o$, the equations $oz=z$ and $z^2=o$ show that
$o,z$ are incomparable: multiplying either proposed comparison
on the right by $z$ gives its reverse.
But $o+z$ must also be fixed by left multiplication by $v$,
so it would equal $o$ or $z$, a contradiction.
Thus $z^2=z$, completing the proof.
\end{proof}

\subsection{A square and one right multiplier}

We record three elementary facts used below. Distinct elements
of a finite multiplicative subgroup of an ai-semiring are
incomparable: a comparison can be translated to $1\leq g$,
and iteration around the finite order of $g$ forces equality.
If such a subgroup is $C_3$ and $|S|\leq5$, the join of any
two distinct group elements contains the third. To see this,
the join lies outside the group; its orbit under the group has
at most two elements, but its size divides three. The join is
therefore fixed, and translation supplies the third element.
Similarly, a subgroup $C_4$ in an ai-semiring of order at most
five forces the entire semiring to be its flat extension $G$:
every join of distinct group elements is the unique remaining
element, which is absorbing for multiplication by distributivity.

We also use the elementary finite-semigroup implication
\begin{equation}\label{eq:group-element}
 q=q^2z\quad\Longrightarrow\quad
 q\text{ belongs to a cyclic subgroup}.
\end{equation}
Indeed, $q^jz=q^{j-1}$ for $j\geq2$. If
$q^r=q^{r+d}$, multiply by $z^{r-1}$ to obtain
$q=q^{d+1}$, so the cyclic subsemigroup of $q$ is a group.

Define two five-element ai-semirings $E_1,E_2$ on
$\{z,b,q,f,t\}$. In both, addition is determined by
\begin{equation}\label{eq:exception-order}
 z<q<t,\qquad z<f<t,\qquad b<f<t,
 \qquad b+q=t,
\end{equation}
with no further comparabilities. For $E_1$, the elements
$z,b,q$ form the cyclic semigroup
$z^2=b$, $z^3=q$, $z^4=b$, and every product involving $f$
or $t$ is $t$. For $E_2$, multiplication is cyclic:
\[
 z^2=b,\qquad z^3=q,\qquad z^4=f,\qquad z^5=z^6=t.
\]
The ensuing proof only needs the fact that these descriptions
are forced whenever they arise. Their semiring laws can also
be checked directly from the displayed descriptions.

\begin{lemma}\label{lem:three-exceptions}
Let $S$ be an ai-semiring of order at most five, and let $b$
be a square. If, for some $z\in S$,
\begin{equation}\label{eq:two-variable-failure}
 bz\not\leq b+bz^2+z,
\end{equation}
then $S$ is $E_1$, $E_2$, or $G$, with the following failing
elements: the displayed $b,z$ in $E_1,E_2$, or
$b=z=g^2$ in $G$. In particular, except in $G$,
\begin{equation}\label{eq:local-square}
 ubz\leq ub+ubz^2+uz\qquad(u,z\in S,\ b\in Q(S)).
\end{equation}
\end{lemma}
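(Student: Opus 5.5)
We work with the cyclic subsemigroup generated by $z$ and the square root of $b$. Write $b=c^2$. The first reduction: if $bz\le bz^2$ or $bz\le z$, the inequality \eqref{eq:two-variable-failure} fails, and so does any case in which $bz$ equals one of $b$, $bz^2$, $z$. Hence a failure forces $bz\notin\{b,bz^2,z\}$. It also forces $bz$ to be incomparable with or above each summand in a way that the join $b+bz^2+z$ does not dominate. Since $|S|\le5$, the elements $b,bz,bz^2,z$ together with the join $j=b+bz^2+z$ nearly exhaust $S$. The plan is to show that these elements are pairwise distinct in most configurations and that the multiplication is then forced.

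\textbf{Case split on $z$ versus $b$.} First suppose $b\in\langle z\rangle$, say $b=z^2$; this is the generic situation behind $E_1,E_2,G$. Then the elements involved are powers $z^2,z^3,z^4,\dots$ of $z$. If $z^3=z^2\cdot z$ lies in a cyclic subgroup via \eqref{eq:group-element}, we use the subgroup facts recorded before the lemma. A subgroup $C_4$ forces $S=G$, and then $z=g^2$, $b=1$ must be checked; equivalently $b=z=g^2$, as in the paper's naming. A subgroup $C_3$ is excluded because the join of two group elements contains the third, so the failure disappears. A subgroup $C_2$ gives period two, with $z^4=z^2$; this is $E_1$ once the join $b+q$ is forced to be a new top element $t$. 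If $\langle z\rangle$ is aperiodic of index up to five, we get the chain $z,z^2,z^3,z^4,z^5=z^6$, which is $E_2$. In each subcase the order relations \eqref{eq:exception-order} are forced: multiplying a putative comparison by $z$ yields a comparison that would make \eqref{eq:two-variable-failure} hold. If $b\notin\langle z\rangle$, we combine the failure conditions with Lemma~\ref{lem:nonsquare-products}-style counting to show that there is no room for a fifth element, and that the inequality then holds.

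\textbf{The ``in particular'' part.} Inequality \eqref{eq:local-square} follows by left multiplication: $ubz\le u(b+bz^2+z)=ub+ubz^2+uz$ whenever the two-variable inequality holds. Outside $G$ it holds for every square $b$ and every $z$, except in $E_1$ and $E_2$ at the displayed pair. For those two semirings we check directly, using that products with $f$ and $t$ are $t$ or that the table is cyclic, that for each $u$ the element $ubz$ is either $t$ (absorbed, since $ub+uz$ is then also $t$ or above it) or handled by the $u=$ power-of-$z$ computation. This gives \eqref{eq:local-square} in $E_1$ and $E_2$.

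\textbf{Main obstacle.} The hardest step is the exhaustive case analysis showing that the failure forces exactly these three structures. In particular we must rule out extra comparabilities and the situation $b\notin\langle z\rangle$ within only five elements. There I would first try to show that $bz$ and $j$ are new elements distinct from $b,bz^2,z$, which pins $|S|=5$ and makes the table rigid.
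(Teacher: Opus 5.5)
Your proposal has a genuine gap. The split ``$b\in\langle z\rangle$, say $b=z^2$'' is not without loss of generality, and it loses one of the three exceptions. In $G$ the failing pair is $b=z=g^2$, where $z^2=1\neq b$. Conversely, whenever $b=z^2$ in $G$ the inequality holds. For instance, $z=g^2$, $b=1$ gives $bz=g^2\leq 1+1+g^2=\infty$. So your sentence ``$z=g^2$, $b=1$ must be checked; equivalently $b=z=g^2$'' treats a non-failing pair as if it were the failing one, and your main branch cannot produce $G$ at all.

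The branch where $b$ is not $z^2$, and in particular $b\notin\langle z\rangle$, is where most of the work lies. You only gesture at ``Lemma~\ref{lem:nonsquare-products}-style counting'', and that lemma does not apply here: it concerns nonsquare products of squares. Your proposed rigidifying step is also false as stated. In $E_2$ the join $b+bz^2+z$ equals $f=z^4=bz^2$, and in $G$ it equals $b=z=bz^2=g^2$. So the join need not be new, and nothing pins $|S|=5$ that way. Even inside the $b=z^2$ branch, the other cyclic types (for example index $3$ with period $2$, or index $4$ with period $1$) are never actually excluded.

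The missing idea is the paper's normalization. Put $q=bz$, $s=bz^2$, $f=b+s+z$ and $t=f+q$. Right multiplication by $z$ shows $b\not\leq q$. Since $b,z\leq f$ and $q\not\leq f$, the elements $q$ and $t$ are distinct and disjoint from $\{b,z,f\}$. This leaves very little room for $b$, $z$, $f$ and a square root $a$ of $b$.

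The paper then splits on whether $b$ is idempotent. In the idempotent case, $z^2=b$ is derived rather than assumed. Left multiplication by $b$ fixes $b,q,t$ and sends $z\mapsto q$ and $f\mapsto t$. It therefore has $b$ as its only preimage of $b$, so $bz^2=b$ forces $z^2=b$.

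For nonidempotent $b=a^2$, the paper splits on the coincidence pattern of $b,z,f$: all distinct, $f=z$, $z=b$, or $f=b$. In each case it locates $a$ among the available elements. $E_2$ arises when all three are distinct, and $G$ arises only when $f=b$, where the powers of $a$ are shown to form $C_4$.

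Your ``in particular'' part is fine in outline. The clean reason is that $uf=ut=t$ for every $u$ in $E_1$ and $E_2$, so $uq\leq t=uf$.
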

\begin{proof}
Fix a failure, and put
\[
 q=bz,\quad s=bz^2,\quad f=b+s+z,\quad t=f+q.
\]
If $b\leq q$, right multiplication by $z$ gives $q\leq s$,
a contradiction. Thus $b\not\leq q$. Since $b\leq f$ and
$q\not\leq f$, the elements $q,f$ are incomparable and
$q,f,t$ are distinct. Also $b,q$ are incomparable.

\smallskip\noindent
\emph{Idempotent $b$.}
Suppose $b^2=b$. Neither $f=b$ nor $z=b$ is possible.
If $f=z$, left multiplication by $b$ gives
$q=bf=b+s+q$, contradicting $b\not\leq q$.
Thus $b,z,f,q,t$ are five distinct elements and exhaust $S$.
Necessarily $b+q=t$. Left multiplication by $b$ fixes $b,q,t$,
sends $z$ to $q$, and sends $f$ to
$b+s+q=t$. It fixes $s$, and $s\leq f$, so $s=b$.
The equation $bz^2=b$ then forces $z^2=b$, because $b$ is
the only preimage of $b$ under that left multiplication.
Hence $z^3=q$, $z^4=b$, and $f=b+z$.
The join $z+q$ is $q$ or $t$. The latter possibility would
give $(z+q)z=b$ while $tz=(b+q)z=q+b=t$.
Thus $z<q$, and all of~\eqref{eq:exception-order} follows.
Distributing the products of $f=b+z$ and $t=b+q$ now gives
exactly $E_1$.

For the remaining cases suppose that $b=a^2$ is not idempotent.

\smallskip\noindent
\emph{Case 1: $b,z,f$ are distinct.}
They exhaust $S$ together with $q,t$, and $b+q=t$.
The root $a$ cannot be $f$ or $t$: either choice dominates
both $b,z$, giving $q=bz\leq a^2=b$.
Nor is $a=b$. If $a=q$, then $q=q^2z$, so $q$ lies in a
cyclic group $H$ and $b=q^2\in H$.
Both $f,t$ are outside $H$, since they strictly dominate $b$.
If $z\notin H$, the size bound gives $|H|\leq2$, making
$b$ idempotent. If $z\in H$, then $z=q^{-1}$; group orders
two and three would give $z=q$ and $z=b$, respectively,
and larger orders require at least six elements including
$f,t$. All possibilities are excluded.
Consequently $a=z$, so $b=z^2$, $q=z^3$, and
$s=z^4\in\{z,b,f\}$.
The value $s=z$ gives a group $C_3$, where the join of $z,b$
contains $q$. The value $s=b$ makes $b$ idempotent.
Thus $s=f$.
Since $z,b\leq f$, multiplication on the right by $z$ gives
$z^5=fz\geq b,q$, whence $z^5=t$ and $z^6=t$.
Neither $z\leq b$ nor $b\leq z$ is possible, since right
multiplication by $z$ would give $b\leq q$ or $q\leq b$.
Thus $z+b=f$. If $z+q=t$, multiplication by $z$ would give
$t=tz=b+f=f$. Therefore $z+q=q$, yielding exactly $E_2$.

\smallskip\noindent
\emph{Case 2: $f=z\ne b$.}
Here $b<z$, and $a\notin\{b,z,t\}$.
If $a=q$, then~\eqref{eq:group-element} applies.
The comparison $b\leq z$ gives $q^4=b^2\leq bz=q$.
Comparability within the cyclic group forces $q^4=q$.
Its order is three, and $s=qz=bq$ is its identity.
The $C_3$ join property yields $q\leq b+s\leq f$, a
contradiction.
Otherwise $S=\{a,b,z,q,t\}$ with five distinct elements.
Since $a^4=b^2\leq q$, its only possible values are $a,q$.
If $a^4=a$, the group $\{a,a^2,a^3\}\cong C_3$ has no
place for its third element: $z,t$ dominate $b$, while
$q$ dominates $a$.
If $a^4=q$, the value $a^3$ cannot be $a$ or $b$, and
cannot be $z$ or $t$ either: these dominate $b$, which would
give $q=a^4\geq a^3$. Thus $a^3=q$ and all later powers
equal $q$. But then $s=qz=b^2z=bq=q$, again a contradiction.

\smallskip\noindent
\emph{Case 3: $z=b\ne f$.}
We have $q=b^2$ and $f=b+b^3$.
The root $a$ cannot dominate $b$, because this would give
$b=a^2\geq b^2=q$. Thus $a\notin\{b,f,t\}$.
If $a=q$, then $b^4=b$, yielding a group $C_3$ whose join
$b+b^3$ contains $q=b^2$, a contradiction.
Otherwise $S=\{a,b,q,f,t\}$. None of these five elements
can be $a^3$: the values $a,b$ contradict $a^4=q$; the
value $q$ would make all later powers equal $q$ and hence
give $b^3=q\leq f$; and the values $f,t$ dominate $b$,
forcing $q=a^4\geq a^3\geq b$.

\smallskip\noindent
\emph{Case 4: $f=b$.}
Here $z,s\leq b$ and $q\leq b^2$.
The root $a$ is not $b$ or $t$, and cannot be $z$,
since $z\leq z^2$ would give $b=z^2\leq z^3=q$.
If $a=q$, then $q=q^2z$ gives a cyclic group, and
$q\leq b^2=q^4$ forces $q=q^4$. Its order is three;
then $s=qz=bq$ is its identity, contradicting $s\leq b$.
Thus $a\notin\{b,q,t,z\}$.

First suppose $b^2=q$. We obtain
$a^4=q$ and $a^6=b^3=qz=s\leq b$.
The element $a^3$ differs from $a,b,q,t$: the first two
values contradict $a^4=q$, the third would give $a^6=q$,
and the fourth would imply $q=a^4\geq a^3=t$.
Hence $S=\{a,b,a^3,q,t\}$.
The value $a^5=b$ would give $a^3=a^6\leq b$ and then
$q\leq a^3\leq b$.
The values $a^5=a^3,q$ give $a^6=q$.
The value $a^5=t$ gives
$a^6=at=a^3+a^5\geq q$.
All contradict $q\not\leq b$. Therefore $a^5=a$,
and the four powers of $a$ form $C_4$. Thus $S=G$;
its flat order forces $z=b=a^2$.

Finally suppose $b^2\ne q$. Then $z\ne b$, so
$S=\{a,z,b,q,t\}$. As $b^2\geq q$, its only possible
values are $a,t$. If $b^2=a$, then $a^4=a$ gives a group
$C_3$, whose third element cannot be $z,t,q$: they satisfy
$z\leq b$, $b\leq t$, and $q\leq a$, respectively.
Thus $b^2=t$.
The only possible values of $a^3$ are $q,t$:
$a,b$ would give $a^4=b$, and $z\leq b$ would give
$t=a^4\leq a^3=z\leq b$.
Put $v=az$, so $av=q$.
If $a^3=q$, then $v\leq q$, leaving only $v=a,q,z$;
these give $av=b,t,z$, respectively, not $q$.
If $a^3=t$, then $at=t$.
The equation $av=q$ excludes $v=a,b,t$; the value $v=z$
would also give $av=z$. Consequently $v=q$ and $aq=q$.
Now $as=a(qz)=s$, but the only fixed elements of left
multiplication by $a$ are $q,t$, neither of which lies
below $b$. This contradicts $s\leq b$ and finishes the
classification.

To obtain~\eqref{eq:local-square}, multiply the two-variable
inequality on the left by $u$ whenever it holds.
In its failing instances in $E_1,E_2$, both $uf$ and $ut$
equal $t$ for every $u$. Since $q\leq t$, left
multiplication still gives $uq\leq uf$.
Only $G$ remains exceptional.
\end{proof}

\subsection{Products of arbitrarily many squares}

\begin{lemma}\label{lem:guard-theoretical}
Let $S$ be an ai-semiring of order at most five, not isomorphic
to $G$. For every $k\geq2$, all squares $a_1,\ldots,a_k$,
and every $z\in S$, put $p=a_1\cdots a_k$. Then
\begin{equation}\label{eq:guard}
 pz\leq p+pz^2+a_1z.
\end{equation}
\end{lemma}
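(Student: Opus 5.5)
The plan is to split according to whether the product $b=a_2\cdots a_k$ of the last $k-1$ squares is itself a square. We write $p=a_1b$, so $b\in K(S)$.

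\emph{Case $b\notin Q(S)$.} Here $b\in K(S)\setminus Q(S)$, and Lemma~\ref{lem:nonsquare-products} gives $bz=bz^2$. Multiplying on the left by $a_1$ yields $pz=pz^2$, so \eqref{eq:guard} holds trivially.

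\emph{Case $b\in Q(S)$.} Now Lemma~\ref{lem:three-exceptions} applies, since $S\not\cong G$. Taking $u=a_1$ in \eqref{eq:local-square} gives
\[
 a_1bz\leq a_1b+a_1bz^2+a_1z,
\]
which is exactly $pz\leq p+pz^2+a_1z$.

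So the whole argument reduces to this dichotomy on $a_2\cdots a_k$. The hypothesis $k\geq2$ guarantees that this product is nonempty; if instead $k=1$ were allowed, with $b$ taken as empty, the claim would fail.

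The expected obstacle is checking that \eqref{eq:local-square} really holds for every left multiplier $u$ and every square $b$ in all ai-semirings of order at most five other than $G$. This includes $E_1$ and $E_2$, whose failing instances are repaired only after left multiplication. The last paragraph of the proof of Lemma~\ref{lem:three-exceptions} asserts exactly this, so I would rely on it. For safety I would recheck it there: in $E_1$ and $E_2$ both $uf$ and $ut$ equal $t$ for every $u$, and since $q\leq t$ we get $uq\leq uf$.

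I would also note that $a_1$ itself need not be related to $b$. Only the left-multiplied form is used, so no further case analysis on $a_1$ is needed.
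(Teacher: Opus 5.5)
Your proof is correct and matches the paper's: split on whether $b=a_2\cdots a_k$ is a square, applying \eqref{eq:local-square} with $u=a_1$ in one case and Lemma~\ref{lem:nonsquare-products} (giving $pz=pz^2$) in the other. One side remark is wrong, though: for $k=1$ the inequality does not fail but holds trivially, since $pz=a_1z$ is itself a summand on the right. The hypothesis $k\geq2$ is needed only so that $b$ is an actual element of $K(S)$ for your dichotomy.
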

\begin{proof}
Put $b=a_2\cdots a_k\in K(S)$. If $b$ is a square,
apply~\eqref{eq:local-square} with $u=a_1$.
Otherwise Lemma~\ref{lem:nonsquare-products} gives $bz=bz^2$,
so $pz=pz^2$. Both cases prove~\eqref{eq:guard}.
\end{proof}

\begin{proposition}\label{prop:five-validity}
The variety $\Avar_5$ satisfies $\sigma_k$ for every
$k\in\mathcal N$.
\end{proposition}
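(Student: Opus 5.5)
Since $\Avar_5$ is generated by the five-element ai-semirings, it suffices to verify $\sigma_k$ in each ai-semiring $S$ with $|S|=5$. We split according to whether $S\cong G$.

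If $S\cong G$, Lemma~\ref{lem:test-validity} already gives $G\models\sigma_k$, since $G\in\TT$. This is where the odd-cycle argument is needed.

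Now suppose $S\not\cong G$, and fix an arbitrary valuation in $S$. We must show
\[
q_k=p_kz\leq p_k+p_kz^2+x_1^2z+\sum_{i=1}^k y_i^2y_{i+1}^2x_i^2z .
\]
It is enough to prove the stronger inequality $p_kz\leq p_k+p_kz^2+x_1^2z$, because adding further summands on the right preserves $\leq$. Put $a_i=x_i^2$. Each $a_i$ is a square, and $p_k=a_1\cdots a_k$ with $k\geq5\geq2$. Lemma~\ref{lem:guard-theoretical} then gives exactly
\[
pz\leq p+pz^2+a_1z ,
\]
which is the required inequality. Thus every five-element generator other than $G$ satisfies even the truncated identity $p_kz\absorb p_k+p_kz^2+x_1^2z$, and the cycle summands are needed only for $G$.

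The only point to check is that the word order in $\poly f_k$ matches the guard. In $\poly f_k$ the summand $x_1^2z$ has the first factor $a_1$ followed by $z$, matching $a_1z$ in \eqref{eq:guard}, and $p_kz^2$ matches $pz^2$. No commutativity is used, so the noncommutative identity $\sigma_k$ follows. I expect no real obstacle here: the difficulty has been absorbed into Lemmas~\ref{lem:nonsquare-products}, \ref{lem:three-exceptions} and~\ref{lem:guard-theoretical}. One should only make sure that the isomorphism type $G$ is the unique exception, so that the dichotomy ``$S\cong G$ or Lemma~\ref{lem:guard-theoretical} applies'' covers all generators.
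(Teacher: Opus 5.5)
Your proposal is correct and is essentially the paper's own proof. You treat $S\cong G$ by Lemma~\ref{lem:test-validity}; for every other five-element generator you apply Lemma~\ref{lem:guard-theoretical} with $a_i=x_i^2$ and then enlarge the right side by the summands $y_i^2y_{i+1}^2x_i^2z$, checking, as the paper does, that the multiplication order in $\poly f_k$ matches \eqref{eq:guard}.
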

\begin{proof}
Let $S$ be a five-element ai-semiring and fix a valuation.
If $S\not\cong G$, apply Lemma~\ref{lem:guard-theoretical} with
$a_i=x_i^2$, and add the summands
$y_i^2y_{i+1}^2x_i^2z$ to its right side.
The resulting inequality is $q_k\leq\poly f_k$, with the
multiplication order displayed in~\eqref{eq:noncomm-poly}.
If $S\cong G$, use Lemma~\ref{lem:test-validity}.
Thus every generator, and hence their generated variety,
satisfies $\sigma_k$.
\end{proof}

\begin{theorem}\label{thm:five}
The variety $\Avar_5$ has no equational basis with a uniform
bound on the number of variables. In particular, it is
nonfinitely based.
\end{theorem}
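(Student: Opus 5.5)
The statement follows by checking the two hypotheses of Theorem~\ref{thm:criterion} for $\mathcal W=\Avar_5$ and then applying that theorem.

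\textbf{Hypothesis 1: $\Avar_5$ contains $H^0$ and $G$.} The algebra $H^0$ has five elements: $0,1,a,a^2,\infty$. It was shown to be a commutative ai-semiring. The algebra $G=C_4\cup\{\infty\}$ also has five elements and was likewise verified to be a commutative ai-semiring. Both are therefore among the generators of $\Avar_5$, so both lie in $\Avar_5$. (Alternatively, Lemma~\ref{lem:padding} gives this directly.) Hence $\TT\subseteq\Avar_5$, as already recorded in~\eqref{eq:testvariety}.

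\textbf{Hypothesis 2: $\Avar_5$ satisfies $\sigma_k$ for infinitely many $k\in\mathcal N$.} By Proposition~\ref{prop:five-validity}, $\Avar_5$ satisfies $\sigma_k$ for every $k\in\mathcal N$. The set $\mathcal N$ of odd integers $k\geq5$ is infinite, so this hypothesis holds.

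\textbf{Conclusion.} Theorem~\ref{thm:criterion} now says that $\Avar_5$ has no equational basis whose identities use a uniformly bounded number of variables. A finite basis uses only finitely many variables, so it would have such a bound. Therefore $\Avar_5$ is nonfinitely based.

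No step here is a real obstacle, since all the substantial work sits in Proposition~\ref{prop:five-validity} and Theorem~\ref{thm:criterion}. The one point needing care is the passage from $\Avar_5$ to its commutative test subvariety $\TT$. Theorem~\ref{thm:criterion} handles this by adjoining $xy\approx yx$ to the deduction system and using the inclusions $\TT\subseteq\mathcal W$ and $\TT\subseteq\CAI$. That is exactly why the inclusion $H^0,G\in\Avar_5$ is the hypothesis to verify.
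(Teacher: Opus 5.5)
Your proposal is correct and matches the paper's proof: $H^0$ and $G$ are five-element ai-semirings and so lie in $\Avar_5$, Proposition~\ref{prop:five-validity} gives $\sigma_k$ for all of the infinitely many $k\in\mathcal N$, and Theorem~\ref{thm:criterion} then yields the conclusion. You are also right that the reduction to the commutative variety $\TT$ is handled inside Theorem~\ref{thm:criterion} and needs no further work here.
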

\begin{proof}
Both $H^0$ and $G$ are five-element ai-semirings.
Apply Theorem~\ref{thm:criterion} and
Proposition~\ref{prop:five-validity}.
\end{proof}

\Needspace{7\baselineskip}
\begin{corollary}\label{cor:interval}
Every ai-semiring variety $\mathcal W$ with
$\TT\subseteq\mathcal W\subseteq\Avar_5$ has no basis with a
uniform bound on the number of variables.
\end{corollary}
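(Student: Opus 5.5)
The plan is to apply Theorem~\ref{thm:criterion} directly to $\mathcal W$. Its two hypotheses have to be checked: $\mathcal W$ contains $H^0$ and $G$, and $\mathcal W$ satisfies $\sigma_k$ for infinitely many $k\in\mathcal N$.

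For the first hypothesis, recall from \eqref{eq:testvariety} that $\TT=\Var(H^0,G)$. The assumed inclusion $\TT\subseteq\mathcal W$ therefore puts both $H^0$ and $G$ into $\mathcal W$.

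For the second hypothesis, use the other inclusion $\mathcal W\subseteq\Avar_5$. Every identity valid in $\Avar_5$ is valid in each of its subvarieties. By Proposition~\ref{prop:five-validity}, $\Avar_5\models\sigma_k$ for every $k\in\mathcal N$, so $\mathcal W\models\sigma_k$ for all $k\in\mathcal N$. This set is infinite, since it consists of all odd $k\geq5$.

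Theorem~\ref{thm:criterion} now says that $\mathcal W$ has no equational basis with a uniformly bounded number of variables. I expect no real obstacle. The only point to confirm is that the criterion is stated for an arbitrary ai-semiring variety and uses only these two hypotheses. Its proof passes to commutative terms through $\TT\subseteq\mathcal W$ and uses Lemmas~\ref{lem:class} and~\ref{lem:substitution}, which concern $\TT$ alone. Both facts hold here, so the corollary follows at once, as does the special case $\mathcal W=\Avar_5$ in Theorem~\ref{thm:five}.
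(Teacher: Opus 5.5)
Your proposal is correct and is essentially the paper's proof. The lower inclusion $\TT\subseteq\mathcal W$ supplies $H^0$ and $G$, the upper inclusion together with Proposition~\ref{prop:five-validity} supplies every $\sigma_k$, and Theorem~\ref{thm:criterion} finishes the argument.
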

\begin{proof}
The lower inclusion supplies the two auxiliary semirings and the
upper inclusion supplies all the identities $\sigma_k$.
Apply Theorem~\ref{thm:criterion}.
\end{proof}

\section{Orders at least six}
\label{sec:six}

A finite algebra is \emph{inherently nonfinitely based} if it
belongs to no locally finite finitely based variety of its
signature. For ai-semirings, restricting the definition to
varieties of ai-semirings gives the same condition. Indeed,
intersecting a locally finite finitely based variety with the
finitely axiomatizable variety $\AI$ again gives a locally
finite finitely based variety.

Define the Zimin words recursively by
$Z_1=x_1$ and $Z_{r+1}=Z_rx_{r+1}Z_r$. For an ai-semiring
$S$, a word $w$ is \emph{maximal} if $S\models w\absorb u$
for a word $u$ forces $u=w$ as literal words. It is
\emph{minimal} if $S\models u\absorb w$ forces $u=w$.
A word having both properties is \emph{isolated}.

We use the following three results from
Dolinka, Gusev, and Volkov~\cite{DGV25}:
\begin{enumerate}
\item an ai-semiring generating a locally finite variety is
inherently nonfinitely based if all its Zimin words are
isolated \cite[Proposition~1]{DGV25};
\item all Zimin words are maximal in the six-element
ai-semiring $\overline{\mathcal A_2^1}$
\cite[Lemma~1]{DGV25};
\item all Zimin words are minimal in the six-element
ai-semiring $\mathcal B_2^1$ \cite[Lemma~2]{DGV25}.
\end{enumerate}
We use the notation of that paper for the last two algebras;
in particular, the overline in
$\overline{\mathcal A_2^1}$ specifies its addition.

\begin{proposition}\label{prop:six}
For every $n\geq6$, the variety $\Avar_n$ contains an
inherently nonfinitely based $36$-element ai-semiring.
Consequently $\Avar_n$ and $\VV_n$ are nonfinitely based;
neither is contained in a locally finite finitely based
variety of the same signature.
\end{proposition}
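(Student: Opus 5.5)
The candidate is $P=\overline{\mathcal A_2^1}\times\mathcal B_2^1$, which has $6\cdot6=36$ elements. Both factors are six-element ai-semirings, so by Lemma~\ref{lem:padding} they lie in $\Avar_n$ for every $n\geq6$, and hence so does $P$. Since $\Avar_n$ is locally finite (Lemma~\ref{lem:padding}), $\Var(P)$ is locally finite as well.

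By result~(1) of~\cite{DGV25}, it then suffices to show that every Zimin word $Z_r$ is isolated in $P$. An identity holds in $P$ exactly when it holds in both factors. So if $P\models Z_r\absorb u$ for a word $u$, then $\overline{\mathcal A_2^1}\models Z_r\absorb u$, and maximality from result~(2) gives $u=Z_r$. Likewise, if $P\models u\absorb Z_r$, then $\mathcal B_2^1\models u\absorb Z_r$, and minimality from result~(3) gives $u=Z_r$. Thus $Z_r$ is isolated in $P$ for every $r$, and $P$ is inherently nonfinitely based.

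For the consequences, suppose $\Avar_n$ were finitely based. Being locally finite and containing $P$, it would contradict inherent nonfinite basability. The same holds for any locally finite finitely based variety of this signature containing $\Avar_n$. For $\VV_n$, note that $P$ is also a semiring of order at most $n$ for which the padding construction works. Hence $P\in\VV_n$, and $\VV_n$ is locally finite. So $\VV_n$ is neither finitely based nor contained in a locally finite finitely based variety; here inherent nonfinite basability is taken in the full signature $(+,\cdot)$, which is exactly how the paper defines it.

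The only delicate point I expect is matching the definitions: result~(1) of~\cite{DGV25} should be read with isolation relative to word inequalities $w\absorb u$ and $u\absorb w$, as in the paper's definitions, and the product argument has to respect this. Since identities of a direct product are exactly the common identities of its factors, this goes through.
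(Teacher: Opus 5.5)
Your argument follows the paper's proof step for step: the same product $\overline{\mathcal A_2^1}\times\mathcal B_2^1$, projection onto the first factor for maximality and onto the second for minimality, then the cited isolation criterion and padding of the factors into $\Avar_n$.

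One justification is wrong, though easily repaired. $P$ has $36$ elements, so it is not ``a semiring of order at most $n$'' when $6\leq n<36$, and padding cannot be applied to $P$ itself. Instead, obtain $P\in\VV_n$ from $P\in\Avar_n$ and the inclusion $\Avar_n\subseteq\VV_n$ (every $n$-element ai-semiring is an $n$-element semiring), as the paper does. Alternatively, pad the two six-element factors separately into $\VV_n$ and take their product.
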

\begin{proof}
Put
\[
M=\overline{\mathcal A_2^1}\times\mathcal B_2^1.
\]
The two factors belong to $\Avar_6$, so $M\in\Avar_6$.
If $M\models Z_r\absorb u$, projection onto the first
factor and maximality force $u=Z_r$. If
$M\models u\absorb Z_r$, projection onto the second factor
and minimality give the same conclusion. Every Zimin word
is therefore isolated in $M$. As $M$ is finite, it generates
a locally finite variety; the cited criterion makes $M$
inherently nonfinitely based.

By Lemma~\ref{lem:padding}, both six-element factors belong
to every $\Avar_n$ with $n\geq6$, and hence so does $M$.
The inclusion $\Avar_n\subseteq\VV_n$ places $M$ in $\VV_n$
as well. Both varieties are locally finite. A locally finite
finitely based variety containing either one would contain
$M$, contrary to its inherent nonfinite basability.
\end{proof}

\begin{remark}
The proof of Proposition~\ref{prop:six} also applies if the
larger joint variety is defined using semirings whose addition
need not be commutative. The witness already has commutative
idempotent addition. The passage in the next section, in
contrast, uses additive commutativity.
\end{remark}

\section{Additive retractions and the classification}
\label{sec:transfer}

For a positive integer $m$, write $ms$ for the sum of $m$
copies of an element $s$, and use the same convention for
terms.

\begin{proposition}\label{prop:intersection}
For every positive integer $n$,
\begin{equation}\label{eq:intersection}
\VV_n\cap\AI=\Avar_n.
\end{equation}
\end{proposition}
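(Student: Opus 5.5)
Inclusion $\Avar_n\subseteq\VV_n\cap\AI$ is immediate, since every $n$-element ai-semiring is an $n$-element semiring. For the reverse inclusion I would show that every identity of $\Avar_n$ holds in every ai-semiring of $\VV_n$. Since $\Avar_n$ is a variety of ai-semirings, it is axiomatized relative to $\AI$ by its identities. It therefore suffices to show the following: if $\poly u\approx\poly v$ fails in some ai-semiring in $\VV_n$, then it fails in some $n$-element semiring $S$ whose additive reduct can be \emph{retracted} to an idempotent one, giving an $n$-element ai-semiring in which it still fails. The natural device is the map $s\mapsto ms$ with $m$ chosen so that $ms$ is additively idempotent in a finite commutative semigroup. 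If $|S|=n$, then $(S,+)$ is a finite commutative semigroup, and there is $m\ge1$ (say $m=n!$, or a suitable multiple) with $2ms=ms$ for all $s$. The image $S'=mS=\{ms:s\in S\}$ is closed under addition and multiplication, because $ms\cdot mt=m^2(st)=m(m\,st)$ by distributivity. It is also additively idempotent. Moreover $|S'|\le n$, so by Lemma~\ref{lem:padding} we get $S'\in\Avar_n$.

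The key step is to transfer identities. Given an ai-semiring identity $\poly u\approx\poly v$ valid in $\Avar_n$, I would produce an identity valid in every $n$-element semiring that specializes, inside any ai-semiring, to $\poly u\approx\poly v$. My first candidate is $m\poly u'\approx m\poly v'$, where $\poly u'$ is obtained from $\poly u$ by substituting $mx$ for each variable $x$. In an arbitrary $n$-element $S$, evaluating this at $x\mapsto s_x$ equals $m\,\poly u(ms_x)$. This is the value of $\poly u$ in $S'$ at the elements $ms_x$, up to the extra factor $m$. The extra factor needs care. A word of degree $d$ becomes $m^d$ times a product, and addition multiplies by $m$, so the value is $m^{d+1}\cdot(\text{product})$. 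Because $m$ acts idempotently on $S$ (as $m\cdot ms=ms$ after choosing $m$ so that $m(mt)=mt$), every such multiple collapses to $m\cdot(\text{product})$, which is exactly the $S'$-value. Hence the identity holds in $S$ because $S'\models\poly u\approx\poly v$. Conversely, in any ai-semiring one has $mx=x$, so the identity $m\poly u'\approx m\poly v'$ reduces to $\poly u\approx\poly v$. Thus every ai-semiring in $\VV_n$ satisfies $\poly u\approx\poly v$.

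The main obstacle is choosing $m$ uniformly so that both $2ms=ms$ and the collapse $m^jt\mapsto mt$ hold in every $n$-element semiring simultaneously. For this I would use the index–period structure of the cyclic subsemigroups of $(S,+)$: index at most $n$ and period dividing $n!$. Taking $m$ a multiple of $n!$ with $m\ge n$ makes $ms$ the idempotent in the additive cyclic subsemigroup of $s$, which gives both properties at once. This is also the point where additive commutativity is used, since it is needed for $mS$ to be closed under addition and for $m(s+t)=ms+mt$.
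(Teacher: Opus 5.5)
Your proof is correct and follows the paper's route. It uses the same additive retraction $s\mapsto n!\,s$ onto an ai-semiring of order at most $n$, placed in $\Avar_n$ by Lemma~\ref{lem:padding}, and transfers $\poly u\approx\poly v$ to an identity scaled by $n!$ that holds in every $n$-element semiring and collapses back to $\poly u\approx\poly v$ in any ai-semiring. The difference is cosmetic: the paper notes that $s\mapsto Ns$ is an endomorphism and uses $N\poly u\approx N\poly v$ directly, so your extra substitution $x\mapsto mx$ and the degree bookkeeping are unnecessary but harmless.
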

\begin{proof}
Let $N=n!$, and let $S$ be an $n$-element semiring with
commutative addition. Every cyclic subsemigroup of $(S,+)$
has index and period at most $n$. Since $N\geq n$ and $N$
is divisible by every possible period,
\begin{equation}\label{eq:multiple}
2Ns=Ns=N^2s\qquad(s\in S).
\end{equation}
For $n=1$ these equalities hold trivially.

The map $e:S\to S$ given by $e(s)=Ns$ satisfies
\[
e(a+b)=e(a)+e(b),
\qquad e(a)e(b)=N^2(ab)=N(ab)=e(ab).
\]
Here additive commutativity and distributivity justify the
expansion of the products and sums, and~\eqref{eq:multiple}
gives the middle equality. Thus $e$ is an idempotent
endomorphism whose image is an ai-semiring of order at most
$n$. By Lemma~\ref{lem:padding}, $e(S)\in\Avar_n$.

If an identity $u\approx v$ holds in $\Avar_n$, then for
every valuation $\theta$ in $S$, the homomorphism property
of $e$ gives
\[
N u(\theta)=e(u(\theta))
=u(e\circ\theta)=v(e\circ\theta)
=e(v(\theta))=N v(\theta).
\]
Hence $Nu\approx Nv$ holds in every generator of $\VV_n$,
and therefore in $\VV_n$. In an ai-semiring this identity
reduces to $u\approx v$. Thus every member of
$\VV_n\cap\AI$ satisfies all identities of $\Avar_n$, proving
one inclusion in~\eqref{eq:intersection}. The other is immediate.
\end{proof}

\begin{corollary}\label{cor:transfer}
If $\Avar_n$ is nonfinitely based, then $\VV_n$ is nonfinitely
based. In particular, $\VV_5$ is nonfinitely based.
\end{corollary}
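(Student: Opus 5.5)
We argue by contraposition: from a finite basis of $\VV_n$ we produce one of $\Avar_n$. Suppose $\Sigma$ is a finite basis of $\VV_n$, and put
\[
\Sigma'=\Sigma\cup\{x+x\approx x\}.
\]
This set is finite. The variety it defines is exactly $\VV_n\cap\AI$, since adding the single identity of additive idempotence to a basis of $\VV_n$ axiomatizes the intersection. Here we use that every semiring satisfies associativity, commutativity of addition and the distributive laws. These laws therefore hold in $\VV_n$ and follow from $\Sigma$, so adding $x+x\approx x$ yields precisely the defining identities of $\AI$ together with those of $\VV_n$.

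By Proposition~\ref{prop:intersection}, $\VV_n\cap\AI=\Avar_n$. Hence $\Sigma'$ is a finite basis of $\Avar_n$, contradicting the hypothesis that $\Avar_n$ is nonfinitely based. The only subtlety is the meaning of ``basis'': a basis of a variety is a set of identities whose models are exactly that variety. So the models of $\Sigma'$ are the members of $\VV_n$ satisfying $x+x\approx x$, that is, $\VV_n\cap\AI$. We do not rely on a basis relative to $\AI$.

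For the second assertion, take $n=5$. Theorem~\ref{thm:five} shows that $\Avar_5$ is nonfinitely based, and the first part then gives that $\VV_5$ is nonfinitely based. No step is a real obstacle. All the substance lies in Proposition~\ref{prop:intersection}, and the remaining points are the observation that intersecting with the finitely based variety $\AI$ preserves finite basability, and the check that both varieties use the same signature $(+,\cdot)$ without constants.
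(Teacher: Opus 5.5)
Your proof is correct and is essentially the paper's own argument: a finite basis of $\VV_n$ together with $x+x\approx x$ would finitely axiomatize $\VV_n\cap\AI=\Avar_n$ by Proposition~\ref{prop:intersection}. Theorem~\ref{thm:five} then settles the case $n=5$.
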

\begin{proof}
A finite basis for $\VV_n$, together with $x+x\approx x$,
would be a finite basis for $\VV_n\cap\AI=\Avar_n$.
Apply Theorem~\ref{thm:five} when $n=5$.
\end{proof}

\begin{proof}[Proof of Theorem~\ref{thm:main}]
For $n=1$ both varieties are trivial and are defined by
$x\approx y$. For $n=2$ use~\cite{SR15,WWLY26}.
The varieties $\Avar_3$ and $\Avar_4$ are nonfinitely based
by~\cite{WJW26,YS26}; $\Avar_5$ is nonfinitely based by
Theorem~\ref{thm:five}; and all $\Avar_n$ with $n\geq6$ are
nonfinitely based by Proposition~\ref{prop:six}.
Corollary~\ref{cor:transfer} gives the corresponding conclusion
for every $\VV_n$ with $n\geq3$.
\end{proof}

\begin{remark}
Equation~\eqref{eq:intersection} concerns the constant-free
signature and the convention of commutative addition.
It does not assert the same statement for a signature with
distinguished constants $0$ or $1$, or for arbitrary
noncommutative addition. No such extension is needed for
Theorem~\ref{thm:main}.
\end{remark}

\end{document}